\newtheorem{thm}{Theorem}[section]
\newtheorem{prop}[thm]{Proposition}
\newtheorem{Ex}[thm]{Example}
\newtheorem{lemma}[thm]{Lemma}
\theoremstyle{definition}
\newtheorem{dfn}[thm]{Definition}
\title[Intersections on the moduli spaces of  flexible polygons]{Intersection numbers of Chern classes of tautological line bundles on the moduli spaces of flexible polygons}
\author{Ilia Nekrasov}
\address{St.~Petersburg State University, Chebyshev Laboratory}
  \email{geometr.nekrasov@yandex.ru}
\author{Gaiane Panina}
\address{St.~Petersburg Department of V.A. Steklov Institute of Mathematics of the Russian Academy of Sciences}
\email{gaiane-panina@rambler.ru}
\author{Alena Zhukova}
\address{St.~Petersburg State University, Faculty of Liberal Arts and Sciences}
\email{a.zhukova@spbu.ru}
\keywords{Polygonal linkage, Chern class, Euler class, intersection theory, moduli space}
\begin{document}
\begin{abstract}
Given a  flexible $n$-gon with generic side lengths, the moduli space of its configurations in $\mathbb{R}^2$ as well as in $\mathbb{R}^3$ is a smooth manifold. It is equipped with $n$ \textit{tautological} line bundles  whose definition is motivated by M. Kontsevich's  tautological bundles over $\mathcal{M}_{0,n}$. We study their Euler classes, first Chern classes and  intersection numbers, that is, top monomials in Chern (Euler) classes.   The latter are interpreted geometrically as the signed numbers of some \textit{triangular configurations} of the flexible polygon.
\end{abstract}

\maketitle \setcounter{section}{0}

\section{Introduction}\label{section_abstr}

Assume that an $n$-tuple of positive numbers $L=(l_1,...,l_n)$ is fixed. We associate with it a \textit{flexible polygon}, that is, $n$  rigid bars of lengths $l_i$ connected in a   cyclic chain by revolving joints. A \textit{configuration} of   $L$ is an $n$-tuple  of points $(q_1,...,q_n)$ with $|q_iq_{i+1}|=l_i, \ \ |q_nq_1|=l_n$.

It is traditional since long (see \cite{fa}, \cite{HausmannKnu}, \cite{Kam}  and many other papers and authors)
to study the following two spaces:

\begin{dfn}\label{defConfSpace} \textit{The moduli space
 $M_2(L)$}  is the set  of all planar configurations  of $L$
modulo  isometries of $\mathbb{R}^2$.

 \textit{The moduli space
 $M_3(L)$}  is the set  of all configurations  of $L$ lying in $\mathbb{R}^3$
modulo orientation preserving  isometries of $\mathbb{R}^3$.
\end{dfn}
\begin{dfn}\label{SecondDfn}
Equivalently, one defines
$$M_2(L)=\{(u_1,...,u_n) \in (S^1)^n : \sum_{i=1}^n l_iu_i=0\}/O(2), \hbox{ and} $$
$$M_3(L)=\{(u_1,...,u_n) \in (S^2)^n : \sum_{i=1}^n l_iu_i=0\}/SO(3). $$
\end{dfn}

The second definition shows that  $M_{2}(L)$  and $M_{3}(L)$ do not depend on the
ordering of $\{l_1,...,l_n\}$; however, they do depend on the values
of $l_i$.

Throughout the paper  we assume that no configuration of $L$ fits in a
straight line. This assumption implies that the moduli spaces $M_{2}(L)$  and $M_{3}(L)$
are smooth closed manifolds.
In more details, let us take all subsets $I\subset \{1,...,n\}$.
The associated hyperplanes
$$\sum_{i\in I}l_i =\sum_{i\notin I}l_i$$
called \textit{walls }subdivide the parameter space $\mathbb{R}_+^n$
into a number of \textit{chambers}. The topological type of $M_2(L)$  and $M_3(L)$  depends only on the chamber containing $L$;
this becomes  clear in view of the (coming below) stable configurations representations.
For $L$ lying strictly inside  a chamber, the spaces $M_{2}(L)$  and $M_{3}(L)$  are smooth manifolds.

Let us make an additional assumption: throughout the paper we assume that $\sum_I\pm l_i$ never vanishes for all non-empty $I\subset \{1,...,n\}$.
This agreement does not restrict generality: one may perturb the edge lengths while staying in the same chamber.
So for instance, when we write $L=(3,2,2,1,1)$, we mean $L=(3+\varepsilon_1,2+\varepsilon_2,2+\varepsilon_3,1+\varepsilon_4,1+\varepsilon_5)$  for some generic small epsilons.

The space $M_{2}(L)$  is an $n-3$-dimensional manifold. In most of the cases it is non-orientable,
so we  work with cohomology ring with coefficients in $\mathbb{Z}_2$.

The space $M_{3}(L)$  is a $2n-6$-dimensional complex-analytic manifold.\footnote{Moreover, Klyachko  \cite{Kl} showed that it is an algebraic variety.} So we work with cohomology ring with integer coefficients. Since $M_3(L)$ has a canonical orientation coming from the complex structure, we canonically  identify $H^{2n-6}(M_3(L),\mathbb{Z})$ with $\mathbb{Z}$.

\subsection*{Stable configuration of points}

We make use of yet another representation of  $M_{2}(L)$  and $M_{3}(L)$.
Following   paper \cite{KapovichMillson}, consider  configurations of $n$ (not necessarily all distinct)
points
$p_i$ in the real
 projective line $\mathbb{R}P^1$ (respectively, complex projective line).  Each point $p_i$  is assigned the weight
 $l_i$.  The
configuration of (weighted) points is called  {\em
stable} if sum of the weights of coinciding points is
less than half the weight of all points.

Denote by $S_\mathbb{R}(L)$ (respectively, $S_\mathbb{C}(L)$) the space of stable configurations in the real projective (respectively, complex projective) line.
The group $PGL(2,\mathbb{R})$  (respectively, $PSL(2,\mathbb{C})$)  acts naturally on this space.

In this setting we have:
$$M_2(L)=S_\mathbb{R}(L)/PGL(2,\mathbb{R}), \hbox{ and}$$
$$M_3(L)=S_\mathbb{C}(L)/PSL(2,\mathbb{C}).$$

Therefore we think of  $M_{2}(L)$  and $M_{3}(L)$ as compactifications of the spaces of $n$-tuples of distinct points on the projective line (either complex or real).  That is, for each $n$ we have a  finite series of compactifications of ${\mathcal{M}}_{0,n}$ (respectively, ${\mathcal{M}}_{0,n}(\mathbb{R})$) depending on the particular choice of the lengths $L$.

For  the Deligne-Mumford compactification $\overline{\mathcal{M}}_{0,n}$ and its real part $\overline{\mathcal{M}}_{0,n}(\mathbb{R})$
M. Kontsevich introduced the \textit{tautological line bundles} $L_i, \ i=1,...,n$. Their  first Chern classes (or Euler classes
for the space $\overline{\mathcal{M}}_{0,n}(\mathbb{R})$) are  called $\psi$-\textit{classes}  $\psi_1, \dots ,\psi_{n}$. It is known \cite{Kon} that the top degree monomials  in $\psi$-classes  equal  the  multinomial coefficients. That is,

$$\psi_1^{d_1} \smile \dots \smile \psi_{n}^{d_n}=\binom{n-3}{d_1\ d_2\ ...\ d_n}\hbox{ \ \ \  for } \sum_{i=1}^n d_i=n-3$$

In the present paper  we mimic the definition  of tautological line bundles   and define $n$ similar tautological line bundles over the spaces $M_2(L)$ and $M_3(L)$, compute their Euler and Chern classes  (Section \ref{SectTaut}), and   study their intersection numbers (Section \ref{SecMon}).
The latter depend on the chamber containing the length vector $L$  and amount to counting some special types of triangular configurations\footnote{Precise meaning is clarified later.}  of the flexible polygon.

Besides, we show that the Chern classes almost never vanish (Proposition \ref{PropNonVanish}).

Informally speaking, computation of Euler classes and their cup-products is a baby version of computation of Chern classes: they are the same  with the exception that for the Euler classes one does not care about orientation since the coefficients are $\mathbb{Z}_2$ .

Throughout the paper all the cup-product computations take place in the ring generated by (the Poincar\'{e} duals of)  \textit{nice submanifolds} (Section \ref{SectNice}).  As a technical tool, we present multiplication rules in the ring (Proposition  \ref{computation_rules}).

\medskip
Let us fix some notation.  We say that some of the edges of a configuration $\{l_iu_i\}_{i\in I}$ are \textit{parallel}  if $u_i=\pm u_j$ for $i,j \in I$.  If  $I=\{k,k+1,...,k+l\}$ is a consecutive family of indices, this means that  the edges lie on a line.

Two parallel edges are either codirected or oppositely directed.

\medskip

\medskip
\textbf{Acknowledgement.} This research is supported by the Russian Science
Foundation under grant 16-11-10039.

\section{Intersections of nice manifolds.}\label{SectNice}
The cohomology rings $H^*(M_2(L),\mathbb{Z}_2)$ and $H^*(M_3(L),\mathbb{Z})$  are described in \cite{HausmannKnu}.
However for the sake of the subsequent computations we  develop intersection theory in different terms. The  (Poincar\'{e} duals of) the introduced below nice submanifolds generate a ring which is sufficient for our goals.

\subsection*{Nice submanifolds of $M_2(L)$}

Let $i\neq j$ belong to $[n]=\{1,...,n\}$.

Denote by $(ij)_{2,L}$  the image of the natural embedding of the space \newline $M_2(l_i+l_j,l_1,...,\hat{l}_i,...,\hat{l}_j,...,l_n)$ into
the space $M_2(L)$.
 That is, we think of the configurations of the new $n-1$-gon as the configurations of $L$ with parallel codirected edges $i$ and $j$ \textit{frozen} together to a single edge of length $l_i+l_j$. Since the moduli space does not depend on the ordering of the edges, it is convenient to think that $i$ and $j$  are consecutive indices. The space $(ij)_{2,L}$  is a (possibly empty) smooth closed submanifold of $M_2(L)$.  We identify it with the  Poincar\'{e}
 dual cocycle   and write for short
 $$(ij)_{2,L} \in H^1(M_2(L),\mathbb{Z}_2).$$

Denote by $(i\,\overline{j})_{2,L}$   the image of the natural embedding of the space \newline $M_2(|l_i-l_j|,l_1,...,\hat{l}_i,...,\hat{l}_j,...,l_n)$ into
the space $M_2(L)$. Again we have  a smooth closed (possibly empty) submanifold.
Now we think of the configurations of the new polygon  as the configurations of $L$ with parallel oppositely directed edges $i$ and $j$  {frozen} together  to a single edge of length $|l_i-l_j|$.


We can freeze several collections of edges, and analogously define a nice submanifold labeled by the formal product   $$ (l\overline{m})_{2,L}\cdot(ij\overline{k})_{2,L}=(ij\overline{k})_{2,L}\cdot (l\overline{m})_{2,L}\in H^3(M_2(L),\mathbb{Z}_2).$$  All submanifolds arising this way  are called \textit{nice submanifolds of $M_2(L)$, } or just \textit{nice manifolds}  for short.

Putting the above more formally,
 each nice manifold is labeled by an unordered formal product $$(I_1\overline{J}_1)_{2,L}\cdot ...\cdot(I_k\overline{J}_k)_{2,L},$$ where $I_1,...,I_k, {J}_1,,,{J}_k$ are some disjoint subsets of $[n]$  such that each set $I_i\cup {J}_i$ has at least one  element.\footnote{For further computations is convenient to define also nice manifolds with $I_i\cup {J}_i$  consisting of one element. That is,  we set $(1)_{2,L}=M_2(L),\ \  (1)_{2,L}\cdot (23)_{2,L}=(23)_{2,L}$ etc. }
{By definition, the manifold $(I_1\overline{J}_1)_{2,L}\cdot ...\cdot(I_k\overline{J}_k)_{2,L}$ is the subset of $M_2(L)$ defined by the conditions:}

 (1)  $i,j \in I_k$ implies  $u_i=u_j$, and

 (2)  $i \in I_k, j\in J_k$ implies  $u_i=-u_j$.

{Note that $(I_1\overline{J}_1)_{2,L}\cdot ...\cdot(I_k\overline{J}_k)_{2,L}$  is the  intersection of $(I_i\overline{J}_i)$, $i=1,...,k$.}
\medskip

Some of nice manifolds might be empty and thus represent the zero cocycle. This depends on the values of $l_i$.

\subsection*{Nice submanifolds of $M_3(L)$}
By literally repeating the above we define  nice submanifolds of $M_3(L)$ as point sets. Since a complex-analytic manifold has a fixed  orientation  coming from the complex structure, each nice manifold  has a \textit{canonical} orientation.

By definition, the  \textit{relative orientation} of a nice manifold $(I\overline{J})_{3,L}$  coincides with its canonical orientation iff $$\sum _I l_i>\sum _J l_i.\ \ \ \ \ \ \ \ (*)$$

 Further,  the two orientation (canonical and relative ones) of a nice manifold$$(I_1\overline{J}_1)_{3,L}\cdot ...\cdot (I_k\overline{J}_k)_{3,L}$$  coincide iff  the above inequality $(*)$  fails  for even number of $(I_i\overline{J_i})$.

 From now on, by $(I\overline{J})_{3,L} \in H^*(M_3(L),\mathbb{Z})$  we mean  (the Poincar\'{e} dual of) the  nice manifold taken with its relative orientation, whereas $(I\overline{J})_{3,L}^{can}$   denotes the nice manifold with the canonical orientation.

\medskip

To compute cup-product of these cocycles (that is, the intersections of    nice manifolds) we  need the following rules. Since the rules are  the same for all
$L$ and for both dimensions $2$ and $3$ of the ambient Euclidean space, we omit subscripts in the following proposition.In the sequel, we use the subscripts only if the dimension of the ambient space matters.

\begin{prop}\label{rules}\textbf{(Computation  rules)}\label{computation_rules}
The following  rules are valid   for nice submanifolds of $M_2(L)$ and  $M_3(L)$.
\begin{enumerate}
\item The cup-product is a commutative operation.
\item $(I\overline{J})=-(J\overline{I}).$
  \item If  the factors have no common entries, the cup-product equals the formal product, e.g.:
  $$(12) \smile (34)=(12)\cdot (34).$$

  \item If $I_1\cap I_2=\{i\},\ \ I_1\cap J_2= \emptyset, \ \ I_2\cap J_1= \emptyset,\ \ I_2\cap J_2= \emptyset, \hbox{and} \ \ J_1\cap J_2= \emptyset$,  then
$$(I_1\overline{J}_1)\smile (I_2\overline{J}_2)=
   (I_1\cup I_2 \ \overline{J_1\cup J_2}).
 $$
Examples: $$(123)\smile (345)=(12345),$$$$(123)\smile (34\overline{5})=(1234\overline{5}).$$
 \item If $J_1\cap J_2=\{i\},\ \ I_1\cap J_2= \emptyset, \ \ I_2\cap J_1= \emptyset,\ \ I_2\cap J_2= \emptyset, \hbox{and} \ \ I_1\cap I_2= \emptyset,$  then
$$(I_1\overline{J}_1)\smile (I_2\overline{J}_2)=-
   (I_1\cup I_2 \ \overline{J_1\cup J_2}).
 $$

Example:
  $$(12\overline{3})\smile (45\overline{3})=  -(1245\overline{3}).$$

\end{enumerate}

\end{prop}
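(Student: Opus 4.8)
The plan is to realize every nice submanifold through the freezing construction and to reduce each of the five rules to a local transversality statement on the moduli space, supplemented in the $M_3(L)$ case by an orientation computation. Recall that $(I\overline J)$ is the image of a (complex-analytic, in the $M_3$ case) embedding of the smaller polygon space obtained by replacing the edges indexed by $I\cup J$ with a single edge of length $|\sum_{I}l_i-\sum_{J}l_i|$; hence $(I\overline J)$ is a closed submanifold of codimension $|I|+|J|-1$ (real for $M_2(L)$, complex for $M_3(L)$), and a product label is the iterated freezing, i.e.\ the honest intersection of its factors. The one general fact I would invoke repeatedly is standard: if two closed submanifolds $N_1,N_2$ of a closed manifold meet transversally then $\mathrm{PD}[N_1]\smile\mathrm{PD}[N_2]=\mathrm{PD}[N_1\cap N_2]$; over $\mathbb Z_2$ there is nothing more to say, while over $\mathbb Z$ the orientation on $N_1\cap N_2$ is the one determined by the transverse sum of the two normal bundles, and for complex submanifolds of a complex manifold this induced orientation is the complex one.

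With this in hand, parts (1)--(3) are short. For (1): over $\mathbb Z_2$ the cup product is automatically commutative, and in $M_3(L)$ every nice submanifold has even real codimension, so the Koszul sign is $+1$. For (2): the point set underlying $(I\overline J)$ coincides with that of $(J\overline I)$, since interchanging ``codirected'' and ``oppositely directed'' merely swaps the roles of the $+u$ block and the $-u$ block; thus the two cocycles carry the same canonical orientation, whereas the inequality $\sum_I l_i>\sum_J l_i$ governing the relative orientation of $(I\overline J)$ is the exact negation of the one governing $(J\overline I)$, and by the genericity hypothesis exactly one of the two holds, so the relative orientations differ by a sign (the statement being vacuous over $\mathbb Z_2$). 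For (3): when the factor labels are pairwise disjoint, their defining conditions constrain disjoint families of sphere-coordinates, so in local coordinates on the moduli space adapted to a configuration in the intersection the two submanifolds are cut out by complementary coordinate subspaces; they therefore meet transversally, the intersection is set-theoretically the nice submanifold of the formal product, and the product-orientation convention --- the number of violated inequalities $(*)$ being additive over the two disjoint blocks --- makes the signs agree, so the cup product equals the formal product.

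The substance is in (4), from which (5) follows formally: apply (2) to both factors on the left (the two sign changes cancel), then (4) to the resulting labels --- whose hypotheses now hold, the common index having been moved into the ``$I$''-slots --- and finally (2) once more, which now produces the extra minus sign. For (4), suppose $I_1\cap I_2=\{i\}$ with all other pairwise intersections among $I_1,J_1,I_2,J_2$ empty. The conditions defining $(I_1\overline J_1)$ tie the directions indexed by $I_1\cup J_1$ to $u_i$, and those defining $(I_2\overline J_2)$ tie the directions indexed by $I_2\cup J_2$ to $u_i$; the two families of ``slave'' indices $(I_1\cup J_1)\setminus\{i\}$ and $(I_2\cup J_2)\setminus\{i\}$ are disjoint and share only the common ``master'' direction $u_i$, so once again a computation in suitable local coordinates near a point of the intersection gives transversality and identifies the intersection set as exactly $(I_1\cup I_2\ \overline{J_1\cup J_2})$. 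What remains --- and what I expect to be the main obstacle --- is the orientation bookkeeping in the $M_3(L)$ case: one has to compare the complex (canonical) orientation of the intersection, the orientation it inherits as a transverse intersection of the relatively-oriented cocycles $(I_1\overline J_1)$ and $(I_2\overline J_2)$, and the relative orientation attached to the merged label $(I_1\cup I_2\ \overline{J_1\cup J_2})$. This is precisely the comparison the relative-orientation convention was tailored for, and carrying it out requires tracking how the freezing identifications interact with the complex structure on the polygon space and how the sign of $\sum_I l_i-\sum_J l_i$ propagates when the two blocks are amalgamated. A convenient way to organize the computation is to treat first the case in which $(*)$ holds for all the factors involved, where every orientation in sight is a complex one and positivity of complex intersection numbers settles the sign at once; the verification that the convention then forces the correct sign in the remaining cases is the delicate point.
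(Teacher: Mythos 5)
Your parts (1), (2), and the derivation of (5) from (2) and (4) are correct and essentially the paper's argument. The real issue is part (4), where you explicitly defer the orientation bookkeeping as ``the delicate point'' --- but that bookkeeping \emph{is} the content of the proposition, and the route you sketch for it does not work. Your plan is to first treat the case where $(*)$ holds for every factor, so that ``every orientation in sight is a complex one'' and positivity of complex intersections settles the sign. This fails for two reasons. First, a barred nice manifold $(I\overline{J})$ is not a complex submanifold of $M_3(L)$: it is cut out by conditions of the form $u_i=-u_j$, i.e.\ by the graph of the antipodal involution of $\mathbb{C}P^1$, which is anti-holomorphic; its ``canonical'' orientation is transported from the complex structure of the \emph{smaller} moduli space through an embedding that is not holomorphic, so positivity of complex intersection numbers does not apply. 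Second, the sign is genuinely not multiplicative over canonical orientations: one can have $\sum_{I_1}l>\sum_{J_1}l$ and $\sum_{I_2}l>\sum_{J_2}l$ while $\sum_{I_1\cup I_2}l<\sum_{J_1\cup J_2}l$ (e.g.\ $I_1=\{i,a\}$, $J_1=\{b\}$, $I_2=\{i,c\}$, $J_2=\{d\}$ with $l_i$ large and $l_b,l_d$ just below $l_i+l_a$), so the product of two canonically oriented classes must come out as \emph{minus} the canonical class of the intersection --- the opposite of what positivity would predict. The same unproved claim (that canonical orientations multiply under transverse intersection) is also silently used in your argument for (3); the paper does not prove it either but cites the toric-geometry description of Hausmann--Knutson, and crucially uses it only for \emph{disjoint} labels.

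The paper's proof of (4) avoids all of this by never comparing to complex orientations: it fixes coordinates $(u_1,\dots,u_{n-3})\in(S^2)^{n-3}$ on a dense subset of $M_3(L)$, writes down an explicit tangent basis $(du_1,\{du_i\}_{i\notin I\cup J\cup\{n,n-1,n-2\}})$ realizing the \emph{relative} orientation of each nice manifold, and checks the product formula directly at the level of these bases. You would also need the second ingredient of that proof: when $I_1\cup J_1\cup I_2\cup J_2$ leaves fewer than three indices free, the coordinate system cannot be set up, and the paper reduces to the previous case by ``defreezing'' an unused edge $n$ into three shorter edges, passing to $L'=(l_1,\dots,l_{n-1},\tfrac13 l_n,\tfrac13 l_n,\tfrac13 l_n)$ and intersecting with $(n\;n{+}1\;n{+}2)_{3,L'}$. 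Your proposal addresses neither the correct orientation comparison nor this boundary case, so as written it does not establish (4).
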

\begin{proof} The statement (1) is true for $M_2$ since we work over $\mathbb{Z}_2$.  (1) is true also for $M_3$  since  the dimension  of a nice manifold is even. The statement (2) follows from the definition. The statement (5) follows from (2) and (4).
The statement (3) follows from  $(I_1\overline{J}_1)^{can} \smile (I_2\overline{J}_2)^{can}= ((I_1\overline{J}_1)\cdot (I_2\overline{J}_2))^{can}$, which is true by reasons of toric geometry, see \cite{HausmannKnu}.


So it remains to prove  (4).
  In notation of Definition \ref{SecondDfn}, take $(u_1,...,u_{n-3})\in (S^2)^{n-3}$ as a coordinate system  on $M_3(L)$. It is well-defined on some connected dense subset  of $ M_3(L)$. Taken together, the standard orientations of each of the copies of $S^2$ give the canonical
orientation on $M_3(L)$. In other words, the  basis  of the tangent space  $(d u_1,du_2,du_3,...,du_{n-3})$ yields the canonical orientation.

Let us start with two examples.

(A) The nice manifold $(12)_{3,L}$ embeds
as $$(u_1,u_3,...,u_{n-3})\rightarrow (u_1,u_1, u_3,...,u_{n-3}).$$ The relative orientation is defined by the basis $(d u_1,d u_3,...,d u_{n-3})$  of the tangent space. It always coincides with the canonical orientation.

              (B) The nice manifold $(1\overline{2})_{3,L}$  embeds
as $$(u_1,u_3,...,u_{n-3})\rightarrow (u_1,-u_1, u_3,...,u_{n-3}).$$ The relative orientation is defined by the basis $(d u_1,d u_3,...,d u_{n-3})$  of the tangent space. It coincides with the canonical orientation iff $l_1>l_2$.

Note that the nice manifold $(2\overline{1})_{3,L}$ coincides with  $(1\overline{2})_{3,L}$ as a point set, but comes with the opposite relative orientation defined by the basis $(du_2,du_3,...,du_{n-3})$.

 A nice manifold $(I\overline{J})$  such that none of $n,n-1,n-2$ belongs to $I\cup J$  embeds in a similar way.
Assuming that $1\in I$, the relative orientation is defined by the basis $$(d u_1,\{du_i\}_{i\notin I\cup J\cup\{n,n-1,n-2\}}).$$

For a nice manifold $(I\overline{J})$  such that   $I\cup J$ has less than $n-3$ elements, one chooses another coordinate system obtained by renumbering of the edges.

The statement (4) is now straightforward   if there are at least three edges that do not participate in the labels of nice manifolds.

Now prove (4) for the general case. We may assume that $n\notin I_1\cup I_2\cup J_1\cup J_2$.
\textit{Defreeze} the edge  $n$: cut it in three smaller edges and join the pieces by additional revolving joints.
Thus we obtain a new linkage $$L'=(l_1,...,l_{n-1}, \frac{1}{3}l_n,\frac{1}{3}l_n,\frac{1}{3}l_n).$$
The nice manifolds $(I_1\overline{J}_1)_{3,L}\smile (I_2\overline{J}_2)_{3,L}$  and \newline $(I_1\overline{J}_1)_{3,L'}\smile (I_2\overline{J}_2)_{3,L'}\smile(n\ n+1\ n+2)_{3,L'}$  have one and the same relative orientation.  For $(I_1\overline{J}_1)_{3,L'}\smile (I_2\overline{J}_2)_{3,L'}$ we can apply (4) and write

$$(I_1\overline{J}_1)_{3,L}\smile (I_2\overline{J}_2)_{3,L}=(I_1\overline{J}_1)_{3,L'}\smile (I_2\overline{J}_2)_{3,L'}\smile(n\ n+1\ n+2)_{3,L'}=$$
$$ \Big((I_1\overline{J}_1)_{3,L'}\cdot (I_2\overline{J}_2)_{3,L'}\Big)\smile(n\ n+1\ n+2)_{3,L'}=$$
$$(I_1\cup I_2\ \overline{J_1\cup J_2})_{3,L'}\smile(n\ n+1\ n+2)_{3,L'}=(I_1\cup I_2\ \overline{J_1\cup J_2})_{3,L}.$$
\end{proof}

\section{Tautological line bundles over $M_{2}$  and $M_{3}$.  Euler  and Chern  classes.}\label{SectTaut}
Let us give the main definition in notation of  Definition \ref{SecondDfn}:
\begin{dfn}
\begin{enumerate}
  \item  The tautological line bundle $E_{2,i}(L) $ is the real line bundle over the space $M_2(L)$  whose fiber over a point $(u_1,...,u_n)\in (\mathbb{R}P^1)^n$ is the tangent line to  $\mathbb{R}P^1$
at the point $u_i$.

  \item
Analogously, the tautological line bundle
 $E_{3,i}(L)$ is the complex line bundle over the space $M_3(L)$  whose fiber over a point $(u_1,...,u_n)\in (\mathbb{C}P^1)^n$ is the complex tangent line to the complex projective line $\mathbb{C}P^1$
at the point $u_i$.
\end{enumerate}

\end{dfn}

\begin{lemma}The bundles $E_{2,i}(L)$ and  $E_{2,j}(L)$ are isomorphic for any $i,j$.
\end{lemma}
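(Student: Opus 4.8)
The plan is to build an explicit isomorphism ``upstairs'' on the configuration space and then check that it descends to the quotient. I work in the model $M_2(L)=\widetilde{M}/O(2)$ of Definition~\ref{SecondDfn}, where $\widetilde{M}=\{(u_1,\dots,u_n)\in(S^1)^n:\sum_{i=1}^n l_iu_i=0\}$ and $O(2)$ acts diagonally. Since the two-fold covering $S^1\to\mathbb{R}P^1$ is a local diffeomorphism, the tangent line to $\mathbb{R}P^1$ at $[u_i]$ is canonically identified with $T_{u_i}S^1$, and under this identification the pullback of $E_{2,i}(L)$ to $\widetilde{M}$ is $\mathrm{pr}_i^{*}TS^1$, where $\mathrm{pr}_i\colon\widetilde{M}\to S^1$ is the $i$-th coordinate map.

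First I would note that $\mathrm{pr}_i^{*}TS^1$ is canonically trivial: regarding $S^1\subset\mathbb{C}$, the assignment $s_i(u)=iu_i\in T_{u_i}S^1$ is a smooth, nowhere-vanishing section of $E_{2,i}(L)|_{\widetilde{M}}$ (the pullback of the standard unit tangent vector field of $S^1$). Consequently the rule $\Phi_{ij}\bigl(a\cdot s_i(u)\bigr)=a\cdot s_j(u)$, $a\in\mathbb{R}$, defines a smooth bundle isomorphism $\Phi_{ij}$ from $E_{2,i}(L)|_{\widetilde{M}}$ to $E_{2,j}(L)|_{\widetilde{M}}$.

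The key step is to verify that $\Phi_{ij}$ is $O(2)$-equivariant, so that it descends to $M_2(L)$. The group $O(2)$ acts on each $E_{2,i}$ through the differential of its action on $S^1$, and a direct computation gives $g_{*}s_i(u)=(\det g)\,s_i(g\cdot u)$ for every $g\in O(2)$: one checks this on rotations, where $\det g=1$ and $s_i$ is equivariant, and on the reflection $z\mapsto\bar z$, where $\det g=-1$ and $dg(iu_i)=\overline{iu_i}=-i\,\overline{u_i}=-s_i(g\cdot u)$; these generate $O(2)$, and the identity is compatible with composition since $\det$ is a homomorphism. The decisive point is that the scalar $\det g$ is the same for every index $i$, so $\Phi_{ij}$ intertwines the two $O(2)$-actions and therefore descends to an isomorphism $E_{2,i}(L)\cong E_{2,j}(L)$ over $M_2(L)=\widetilde{M}/O(2)$.

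The only real obstacle is the equivariance check in the last step, i.e.\ the observation that the ambiguity in the canonical trivialization of $E_{2,i}$ over $\widetilde{M}$ is governed by the orientation character $\det\colon O(2)\to\{\pm1\}$ independently of $i$; everything else is formal. (Equivalently, since real line bundles over any space are classified by their first Stiefel--Whitney class, it would suffice to prove $w_1(E_{2,i}(L))=w_1(E_{2,j}(L))$ in $H^1(M_2(L);\mathbb{Z}_2)$, and the same computation identifies these classes; but the section argument has the advantage of producing the isomorphism explicitly.)
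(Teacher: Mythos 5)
Your proof is correct and follows essentially the same route as the paper: both pass to $\widetilde{M}_2(L)=\{(u_1,\dots,u_n)\in(S^1)^n:\sum l_iu_i=0\}$, construct an explicit fiberwise isomorphism there, and check that it descends through the $O(2)$-quotient; indeed your $\Phi_{ij}$ is exactly the differential of the paper's pointwise rotation taking $u_i$ to $u_j$. Your version merely makes the equivariance check (via the character $\det\colon O(2)\to\{\pm1\}$) more explicit than the paper's one-line assertion that $d\rho$ commutes with the $O(2)$-action.
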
 \textit{Proof.} {Define $\widetilde{M}_2(L):=\{(u_1,...,u_n) \in (S^1)^n : \sum_{i=1}^n l_iu_i=0\} $. That is, we have $M_2(L)=\widetilde{M}_2(L)/O(2)$. The space $\widetilde{M}_2(L)$ comes equipped with the  line bundles $\widetilde{E}_{2,i}(L)$:  the  fiber of $\widetilde{E}_{2,i}(L)$ over a point $(u_1,...,u_n)$ is the tangent line to  $\mathbb{R}P^1$ at the point $u_i$. Clearly, we have $E_{2,i}(L)=\widetilde{E}_{2,i}(L)/O(2).$  Let $\rho$ be the (unique) rotation of the circle $S^1$ which takes $u_i$ to $u_j$. Its pushforward $d\rho: \widetilde{E}_{2,i}(L)\rightarrow \widetilde{E}_{2,j}(L)$ is an isomorphism. Since $d\rho$ commutes with the action of $O(2)$, it yields an isomorphism between $E_{2,i}(L)$ and  $E_{2,j}(L)$. \qed}

\medskip

The bundles $E_{3,i}(L)$ and   $E_{3,j}(L)$  are (in general) not isomorphic, see Lemma  \ref{Lemma_for_quadrilateral} and further examples.


\medskip

\begin{thm}\label{ThmEulerChern}For $n\geq 4$ we have:\begin{enumerate}
                     \item

                     \begin{enumerate}
                       \item The   Euler class of $E_{2,i}(L)$ does not depend on $i$ and  equals\footnote{We omit in the notation dependence on the $L$, although $Ch(i)$ as well as $e$ depend on the vector $L$.} $$e:=e(E_{2,i}(L))=(12)_{2,L}+(1\overline{2})_{2,L}=(ij)_{2,L}+(i\,\overline{j})_{2,L}  \in H^1(M_2(L),\mathbb{Z}_2) \hbox{ for any} \ i \neq j.$$
                       \item The alternative expression for the Euler class is: $$e=(jk)_{2,L}+(kr)_{2,L}+(jr)_{2,L}  \in H^1(M_2(L),\mathbb{Z}_2) \hbox{ for any distinct }  j,k,r.$$
                     \end{enumerate}

                     \item  The first Chern class of $E_{3,i}(L)$ equals $$Ch(i):=Ch(E_{3,i}(L))=
                   (ij)_{3,L}-(i\,\overline{j})_{3,L} \in H^2(M_3(L),\mathbb{Z}) \hbox{ for any }  j \neq i.$$

                   \end{enumerate}
\end{thm}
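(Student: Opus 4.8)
The plan is to compute the Euler/Chern class of the tautological bundle by choosing a generic section and identifying its zero locus with a nice submanifold. For part (1)(a), consider the bundle $\widetilde E_{2,i}(L)$ over $\widetilde M_2(L)$ whose fiber at $(u_1,\dots,u_n)\in(S^1)^n$ is the tangent line to $S^1$ at $u_i$. The simplest global section of the tangent bundle of $S^1$ is the unit-speed vector field $v\mapsto iv$ (rotation by $\pi/2$); evaluated at $u_i$ it never vanishes, but it is not $O(2)$-equivariant in the way we need, so instead I would use a section built from a \emph{fixed auxiliary edge}, say $u_j$. Namely, take the section $s$ whose value at $u_i$ is the orthogonal projection of $u_j$ onto $T_{u_i}S^1$; this is $O(2)$-equivariant and descends to a section of $E_{2,i}(L)$. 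Its zero set is exactly the locus where $u_j\perp T_{u_i}S^1$, i.e. $u_i=\pm u_j$, which is the point set $(ij)_{2,L}\cup(i\overline j)_{2,L}$. One then checks the zero is transverse (a local computation in the $S^1$-coordinate near $u_i$, using genericity of $L$ so these two submanifolds are smooth and meet the ambient manifold transversally), hence $e(E_{2,i}(L))=(ij)_{2,L}+(i\overline j)_{2,L}$ in $H^1(M_2(L),\mathbb Z_2)$. Independence of $i$ then follows either from this formula for varying $i$ together with the already-proved isomorphism $E_{2,i}\cong E_{2,j}$, or directly.

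For part (1)(b), the expression $e=(jk)_{2,L}+(kr)_{2,L}+(jr)_{2,L}$ should be derived by taking three distinct auxiliary edges. I would pick the index $i$ to be one of $\{j,k,r\}$ is not possible (we need $i$ distinct), so instead the cleanest route is: fix $i\notin\{j,k,r\}$ if $n\geq 5$, or handle $n=4$ separately; but the more uniform argument is to use a section of $E_{2,j}$ (equivalently $E_{2,i}$, by the isomorphism lemma) built as a combination adapted to the three edges $j,k,r$. Alternatively, and more in the spirit of intersection theory, one proves $(jk)+(jr)+(kr)=(jk)+(j\overline k)$ directly as an identity in $H^1(M_2(L),\mathbb Z_2)$ by comparing zero loci, or by using the computation rules of Proposition \ref{computation_rules}; this reduces (b) to (a). The key identity is that on $M_2(L)$ the class "two given edges are parallel" ($(jk)+(j\overline k)$) equals the class "at least two of three given edges are parallel" modulo $2$, which is a counting statement about configurations with $u_j,u_k,u_r\in\{\pm u\}$.

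For part (2), the argument is the complex-analytic analogue. Over $\widetilde M_3(L)=\{(u_1,\dots,u_n)\in(S^2)^n:\sum l_iu_i=0\}$, the fiber of the tautological bundle at $u_i$ is the complex tangent line $T_{u_i}\mathbb{CP}^1=T_{u_i}S^2$ (with complex structure = rotation by $\pi/2$ in the tangent plane). Take the section $s$ sending $(u_\bullet)$ to the orthogonal projection of the fixed vector $u_j$ onto $T_{u_i}S^2$; this is $SO(3)$-equivariant and descends to $E_{3,i}(L)$. Its zero locus is $\{u_i=\pm u_j\}=(ij)_{3,L}\cup(i\overline j)_{3,L}$ as a point set. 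The new feature over $\mathbb Z$ is that one must compute the \emph{sign} with which each component contributes: near a point of $(ij)_{3,L}$ the section vanishes to first order with local intersection number $+1$ (the section looks like $z\mapsto z$ in a suitable complex chart), while near a point of $(i\overline j)_{3,L}$ it vanishes with local intersection number $-1$ (the section looks like $z\mapsto \bar z$, an orientation-reversing zero), because passing $u_i$ through $-u_j$ the projected vector rotates the opposite way. This sign flip, together with the relative-orientation convention $(*)$ for nice manifolds, yields $Ch(E_{3,i}(L))=(ij)_{3,L}-(i\overline j)_{3,L}$. Independence of the choice of $j$ then follows since the left side does not involve $j$; this also gives, as a bonus, the identities $(ij)-(i\overline j)=(ik)-(i\overline k)$ among nice classes.

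The main obstacle I expect is the transversality-and-sign bookkeeping in part (2): one must verify that the section really is transverse to the zero section along both $(ij)_{3,L}$ and $(i\overline j)_{3,L}$ (using that $L$ is generic, so no configuration is degenerate and the nice submanifolds are smooth of the expected dimension), and that the sign comparison between the local complex-orientation of the zero locus and the relative orientation defined by $(*)$ comes out with the asserted minus sign rather than a plus. Getting the orientation conventions to line up — the canonical complex orientation on $M_3(L)$, the relative orientation of nice manifolds, and the orientation induced on a transverse zero locus by the complex section — is the delicate point; the examples (A) and (B) in the proof of Proposition \ref{computation_rules} give the template for how such a comparison is carried out, and I would model the computation on them, reducing to the case where at least three edges are unconstrained and then "defreezing" an edge exactly as in that proof to handle the general case.
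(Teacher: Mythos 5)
Your overall strategy --- represent the class by the zero locus of the section of $E_{\cdot,i}(L)$ obtained by projecting $u_j$ onto the tangent line at $u_i$ --- is exactly the paper's, and your part (1)(a) is essentially its proof (the paper phrases the same section as "the unit vector pointing along the shortest arc from $u_i$ to $u_j$"). However, two steps are genuinely missing.

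Part (1)(b) is not proved. The identity $(jk)+(kr)+(jr)=(jk)+(j\overline{k})$ is an \emph{additive} relation between codimension-one classes; the rules of Proposition \ref{computation_rules} govern only cup products of nice classes and cannot produce it, and "a counting statement about configurations" is not an argument. The missing construction is a second explicit section: an ordered triple of \emph{distinct} points $(u_j,u_k,u_r)$ of $S^1$ determines an orientation of $S^1$, hence a unit tangent vector at $u_i$; this is $O(2)$-equivariant, and its singular locus is precisely where two of the three points coincide (coincide, not become antipodal --- antipodal points are still distinct), i.e.\ $(jk)\cup(kr)\cup(jr)$. Comparing with the section of (1)(a) gives (1)(b). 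Without some such second section, (b) does not follow from (a).

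In part (2) the sign at $(i\overline{j})_{3,L}$ is asserted rather than computed, and the heuristic you give is false as stated. The vector field $u\mapsto \mathrm{proj}_{T_uS^2}(u_j)$ on the $u_i$-sphere is the gradient of the height function $\langle u,u_j\rangle$; \emph{both} of its zeros, at $u_i=u_j$ and at $u_i=-u_j$, have index $+1$ (their sum is $\chi(S^2)=2$), so there is no intrinsic "$z\mapsto\bar z$" behaviour at the antipodal zero. The $-1$ emerges only after (i) choosing the transverse disc to $(i\overline{j})$ correctly inside $M_3(L)$ --- the closure constraint forbids varying $u_i$ alone --- and (ii) comparing its orientation with the canonical orientation of $M_3(L)$ and the relative orientation $(*)$. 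Indeed, with respect to the \emph{canonical} orientation of $(i\overline{j})$ the local contribution is $+1$ when $l_i<l_j$ and $-1$ when $l_i>l_j$ (compare Lemma \ref{Lemma_for_quadrilateral}: $Ch(1)=-1$ but $Ch(2)=+1$ on the same point set $(1\overline{2})=(2\overline{1})$); the uniform "$-$" in the statement exists only because the relative-orientation convention absorbs exactly this length dependence. You correctly flag this as the delicate point, but resolving it requires an actual computation: the paper does the case $n=4$ explicitly on $M_3(L)\cong S^2$ via a winding number, and then reduces the general case to $n=4$ by freezing edges $4,\dots,n$ and the dihedral angle to manufacture the transverse disc. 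Some version of that two-step verification must be supplied for the proof to be complete.
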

\begin{proof}
Let us remind the reader briefly \textbf{a recipe } for
computing the first Chern class {of a complex line bundle. The background for the recipe comes from \cite{MilSt} and its explicitation \cite{Kaz}: in our case, the first Chern class equals the Euler class of the complex line bundle, so it is represented by the zero locus of a generic smooth section.} Assume we have a complex line bundle $E$  over an oriented smooth base $B$.
Replace $E$ by an $S^1$-bundle by taking the oriented unit  circle in each of the fibers.
\begin{itemize}
  \item If the dimension of the base is $2$,  the choice of orientation identifies $H^2(B,\mathbb{Z})$ with $\mathbb{Z}$. So  the Chern class is identified with some integer number $Ch(E)$ {which is called the Chern number, or the Euler-Chern number}. Choose a section $s$ of the $S^1$-bundle which is discontinuous at a finite number of singular points. {The singularities of the section correspond to the zeros of the associated section of $E$.} Each point contributes to $Ch(E)$  a summand.   Take one of such points $p$ and a small  positively oriented  circle $\omega \subset B$ embracing $p$. We may assume that the circle is small enough to fit in a neighborhood of $p$ where the bundle is trivializable. This means that in the neighborhood there exist a continuous section $t$.  Except for $t$, at each point of $\omega$ we have the  section $s$.
The point $p$ contributes to $Ch(E)$ the winding number of $s$ with respect to $t$. {For removable singularities this contribution is zero.}
      \item  The general case reduces to the two-dimensional one. Let the  dimension of the base be greater than $2$.  Choose a section $s$ of the $S^1$-bundle which is discontinuous at a finite number of { (not necessarily disjoint) }oriented submanifolds $m_i$ \footnote{ {The zero locus of a  generic smooth section of $E$ gives an example of such $m_i$. }} of real codimension $2$.  {In our case, we'll present these submanifolds explicitly; they are disjoint.} Each $m_i$ contributes  to the cocycle $Ch(E)$ the summand $c_i\cdot m_i$ where the integer number $c_i$  is computed as follows. Take a $2$-dimensional smooth oriented {disc} $\mathcal{D}\subset B$  transversally intersecting the manifold $m_i$ at a smooth point $p$.  We assume that  taken together, orientations of $m_i$ and $\mathcal{D}$ yield the (already fixed) orientation of $B$.  The pullback of the line bundle to $\mathcal{D}$ comes with the restriction of the section $s$, which we denote by $\overline{s}$. Let $c_i$ be the number computed in the previous section related to $\mathcal{D}$ and to the section $\overline{s}$ at the point $p$. Then the first Chern class equals (the Poincar\'{e} dual of) \ $Ch(E) = \sum_i c_im_i$.
                                                                 \end{itemize}

\medskip

The proof starts similarly for all the cases (1,a), (1,b) and (2). Let us describe a  section of $E_{2,i}(L)$ and $E_{3,i}(L)$ respectively. Its zero locus (taken with a weight $\pm 1$, coming from mutual orientations) is the desired  class, either   $e(i)$  or $Ch(i)$.

Fix any $j\neq i$, and consider the following (discontinuous) section of $E_{2,i}(L)$ or $E_{3,i}(L)$ respectively: at a point $(u_1,...,u_n)$   we take the unit vector in the tangent line (either real or complex) at $u_i$ pointing in the direction of the shortest arc connecting $u_i$ and $u_j$.  The section is well-defined except for the points of $M(L)$ with $u_i=\pm u_j$, that is, except for nice manifolds $(ij)$ and $(i\,\overline{j})$. The section is transversal to the zero section.

(1,a) To compute $e(i)$, it suffices to observe that the section has no continuous extension  neither to  $(1i)_{2,L}$ nor to $(1\overline{i})_{2,L}$, so the statement is proven.

(1,b) Each (discontinuous) choice of orientation of $S^1$  induces a discontinuous  section of $E_{2,i}(L)$. An orientation of the circle $S^1$ is defined by any three distinct points, say, $(u_j,u_k,u_r)$. Consider the section whose orientation agrees with the orientation of the ordered triple $(u_j,u_k,u_r)$. {Whenever any two of  $u_j,u_k,u_r$ coincide, we obtain a non-removable singularity of the section, so the statement follows.}

(2)  To compute $Ch(i)$, we need to take orientation into account.  We already know that $Ch(E_{3,i}(L))=A\cdot (ij)_{3,L}-B\cdot (i\,\overline{j})_{3,L}$ for some {integer (possibly zero)} $A$ and $B$ that may depend on $L$, $i$ and $j$.

\medskip

We are going to apply the above recipe. Therefore  we first look at the case when the base has real dimension two, which corresponds to $n=4$.
All existing cases are described in the following lemma:
\begin{lemma}\label{Lemma_for_quadrilateral}
{Theorem \ref{ThmEulerChern} (2) is valid for $n=4$. More precisely,}
\begin{enumerate}
  \item For $l_1,l_2,l_3,l_4$ such that $l_1>l_2>l_3>l_4$ and $l_3+l_2>l_4+l_1$, we have
 $Ch(1)=Ch(2)=Ch(3)=0$,  and \ \ $Ch(4)=2$.

   \item For $l_1,l_2,l_3,l_4$ such that $l_1>l_2>l_3>l_4$ and $l_3+l_2<l_4+l_1$, we have
  $Ch(2)=Ch(3)=Ch(4)=1$, and \ \ $Ch(1)=-1$.

\end{enumerate}
\end{lemma}\textit{Proof of the lemma.}
{Let us compute $Ch(1)$. Consider the section
of $E_{3,1}(L)$ equal to the unit vector $T_{u_1}(S^2)$ pointing in the direction of the shortest arc connecting $u_1$ and $u_2$.
The section is well-defined everywhere except for the points $(12)_{3,L}$ and $(1\overline{2})_{3,L}$. The manifold $M_3(L)$ is diffeomorphic to $S^2$ and can be parameterized by coordinates $(d, \alpha)$, where $d$  is the length of the diagonal $q_1q_3$ in a configuration $(q_1,q_2,q_3,q_4)$, and $\alpha$ is the angle between the (affine hulls of) triangles $q_1q_2q_3$ and $q_3q_4q_1$.
  The contribution of $(12)_{3,L}$ to the Chern number is computed according to the above recipe. Take a small circle on $M_3(L)$ embracing $(12)_{3,L}$ as follows: make first two edges almost parallel and codirected, and  rotate the (almost degenerate) triangle $q_1q_2q_3$ with respect to $q_3q_4q_1$ keeping diagonal $q_1q_3$ fixed. It remains to count the winding number. The contribution of the $(1\overline{2})_{3,L}$ to the Chern number is found  analogously. }

{Although the lemma is proven, let us give one more insight. Assume we have $l_1>l_2>l_3>l_4$ and $l_3+l_2>l_4+l_1$. Let us we make use of the representation of configurations of stable points  and factor out the action of $PSL(2,\mathbb{C})$ by assuming that first three points $p_{1}, p_{2}$ and $p_{3}$ are $0,1$ and $\infty$ respectively.  This is always possible  since $p_1,p_2$, and $p_3$ never coincide pairwise. Thus, $M_3(L)$ is the two-sphere, and $E_{3,4}(L)$  is the tangent bundle over $S^2$, whose Chern number count is a classical exercise in topology courses. Let us just repeat that in this representation we take the (same as above) section of the $E_{3,4}(L)$ as the unite vector in the tangent plane going in the direction of the point $p_1 = 0$. \qed}

\medskip

Now we prove the statement (2) of the theorem for the general case.
Without loss of generality, we may assume that $i=1$.
 Choose a generic point $p\in (12)_{3,L}$  (or a generic point $p\in (1\overline{2})_{3,L}$). It is some  configuration  $(q_1,\dots,q_n)$. Freeze  the edges  $4,5,...,n$. Fix also the rotation parameter with respect to the diagonal $q_1q_4$  (for instance, one may fix the angle between the planes $(q_1q_2q_3)$ and $(q_1q_4q_5)$; generically these triples are not collinear).  Keeping in mind the recipe, set { $\widetilde{\mathcal{D}}\subset M_3$ be the set of configurations satisfying all these freezing conditions. By construction, $p $ lies in $\widetilde{\mathcal{D}}$. The manifold $\widetilde{\mathcal{D}}$ amounts to the configuration space of a $4$-gon constituted of first three edges and the diagonal $q_1q_4$. Let the two-dimensional disc $\mathcal{D}$ be a neighborhood of $p$ in $\widetilde{\mathcal{D}}$. }

It remains to  refer to the case $n=4$. Theorem is proven.

Let us comment on the following alternative proof of $(1,a)$. The space $M_2(L)$ embeds in $M_3(L)$. The representation  via stable points configurations shows that $M_2(L)$  is the space of real points of the complex manifold $M_3(L)$; see \cite{HausmannKnu, Kl} for more details.
The bundle $E_{2,i}(L)$ is the real part of the pullback of $E_{3,i}(L)$. Therefore, the Euler class $e$ is the real part of the pullback of $Ch(i)$. In our framework, to take the pullback of $Ch(i)$ means to intersect its dual class with $M_2(L)$. It remains to observe that $(IJ)_{3,L}\cap M_2(L)=(IJ)_{2,L}$ and $(I\overline{J})_{3,L}\cap M_2(L)=(I\overline{J})_{2,L}$. One should keep in mind that passing to $M_2(L)$  we replace the coefficient ring $\mathbb{Z}$ by $\mathbb{Z}_2$.
\end{proof}

\medskip

\textbf{Remark.}  We have a natural  inclusion $incl:M_3(L)\rightarrow (\mathbb{C}P^1)^n/PSL(2,\mathbb{C})$. Define the analogous linear bundles over $(\mathbb{C}P^1)^n/PSL(2,\mathbb{C})$ and their first Chern classes  $\mathbf{Ch}(i)$. The above computation of the Chern class is merely taking the pullback $Ch(i)=incl^*\mathbf{Ch}(i)$.

\medskip
Let us now examine  the cases when the Chern class $Ch(i)$ is zero.

\begin{prop}\label{PropNonVanish} Assume that $l_1>l_2>...>l_n$. \begin{enumerate}
                                              \item If $$l_2+l_3<l_1+l_4+l_5+...+l_n \ \ \ \ \ \  (**)$$ then the Chern class $Ch(i)$ does not vanish for all $i=1,...,n$.

                                              \item The above condition  $(**)$ holds true for all the chambers (that correspond to non-empty moduli space) except the unique one  represented by $L=(1,1,1,\varepsilon,...,\varepsilon)$.
                                                  In this exceptional case $Ch(1)=Ch(2)=Ch(3)=0$, and  $Ch(i)\neq 0$ for $i>3$.
                                            \end{enumerate}

\end{prop}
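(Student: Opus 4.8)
The plan is to reduce part (1) to the computation of a single intersection number in $M_3(L)$. By Theorem \ref{ThmEulerChern}(2), for a fixed $i$ we may write $Ch(i) = (ij)_{3,L} - (i\overline{j})_{3,L}$ for \emph{any} $j\neq i$. To show $Ch(i)\neq 0$ it suffices to exhibit a cohomology class $\beta$ of complementary degree with $Ch(i)\smile\beta \neq 0$ in $H^{2n-6}(M_3(L),\mathbb{Z})=\mathbb{Z}$. The natural candidate is a product of nice classes that pairs one of the two terms against a top-dimensional configuration while killing the other. Concretely, I would pick $j$ and an auxiliary index (using the freedom that $Ch(i)$ is the same for all choices of $j$) so that, after applying the multiplication rules of Proposition \ref{computation_rules}, the class $(ij)_{3,L}\smile\beta$ is represented by a nonempty zero-dimensional nice manifold — i.e. a \emph{triangular configuration} in the sense promised in the introduction — while $(i\overline{j})_{3,L}\smile\beta$ is either empty or contributes a different value. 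The condition $(**)$ is exactly what is needed to guarantee that the relevant frozen-together configuration is realizable (the frozen edge, of length a sum or difference, together with the remaining edges, still satisfies all polygon inequalities), and that the two terms do not cancel. So the first step is: translate "$(ij)_{3,L}\smile\beta\neq 0$" into an explicit inequality on $L$ via the computation rules and the nonemptiness criterion for nice manifolds, and check that $(**)$ implies it.

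For part (2), the strategy is combinatorial bookkeeping over chambers. Given the ordering $l_1>\dots>l_n$, a chamber corresponds to a choice of sign pattern for each sum $\sum_I \pm l_i$; nonemptiness of the moduli space excludes the patterns where some $l_k$ exceeds the sum of all the others. I would argue that if $(**)$ \emph{fails}, i.e. $l_2+l_3 \geq l_1+l_4+\dots+l_n$, then combined with $l_1 > l_2$ and the ordering this forces $l_4+\dots+l_n < l_2+l_3-l_1 < l_3$, which squeezes the chamber down: $l_1,l_2,l_3$ are all "large and comparable" and $l_4,\dots,l_n$ are all "small", and one checks there is exactly one chamber with nonempty $M_3(L)$ satisfying this, namely $L=(1,1,1,\varepsilon,\dots,\varepsilon)$. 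This is a finite case analysis in the wall arrangement, and I expect it to be the most tedious but least conceptually risky step. For the exceptional chamber itself, $M_3(L)$ for $L=(1,1,1,\varepsilon,\dots,\varepsilon)$ should be identifiable explicitly (the three unit edges form a nearly-rigid triangle, and the small edges trace out a small configuration), and I would compute $Ch(1),Ch(2),Ch(3)$ directly — either by the section-and-winding-number recipe of the proof of Theorem \ref{ThmEulerChern}, or by observing that freezing/defreezing reduces to the $n=4$ case of Lemma \ref{Lemma_for_quadrilateral}(1), where indeed three of the four Chern numbers vanish. That $Ch(i)\neq 0$ for $i>3$ even in this chamber follows because the argument of part (1) only needed $(**)$ for \emph{some} admissible choice of $j$, and for $i>3$ one can still pair against a realizable triangular configuration.

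The hard part will be the explicit realizability/cancellation check in part (1): verifying that the chosen complementary class $\beta$ can be arranged so that exactly one of $(ij)_{3,L}\smile\beta$, $(i\overline{j})_{3,L}\smile\beta$ survives (or that they survive with unequal signed counts), uniformly under the single hypothesis $(**)$ and for every $i$. This is where the interplay between the sign rules (items (2),(5) of Proposition \ref{computation_rules}), the orientation convention $(*)$, and the polygon inequalities all has to line up; a clean way to organize it is to always freeze the $n-4$ "unused" edges into the configuration first, reducing to a quadrilateral, and then invoke Lemma \ref{Lemma_for_quadrilateral} — the content of $(**)$ being precisely that one can do this freezing so as to land in a chamber of the quadrilateral where the relevant Chern number is nonzero.
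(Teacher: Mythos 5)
Your strategy for part (1) --- pair $Ch(i)$ against a product of nice classes that freezes the unused edges into a quadrilateral and then settle matters at the level of $n=4$ --- is exactly the paper's strategy, but you have deferred precisely the step that carries all the content, and without it the argument is not yet a proof. The missing construction is this: take a \emph{maximal} (by inclusion) subset $I\ni 1$ with $\sum_{i\in I}l_i<\sum_{i\notin I}l_i$; condition $(**)$ is used exactly once, to guarantee that $[n]\setminus I$ has at least three elements, so that one can freeze $I$ into a single edge and split the complement into three further groups, obtaining a quadrilateral $L'$ with $M_3(L')\subset M_3(L)$. Maximality of $I$ forces the edge of $L'$ coming from $I$ to be dominant (its length plus any one of the others exceeds the sum of the remaining two), and a direct check of triangle inequalities then shows that for any admissible $j$ \emph{exactly one} of $(ij)$, $(i\overline{j})$ meets $M_3(L')$, transversally in a single point (which of the two depends on whether one of $i,j$ is frozen with edge $1$); hence $Ch(i)\smile [M_3(L')]=\pm1\neq0$ and the cancellation you worry about cannot occur, because only one term survives at all. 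Until you specify how $\beta$ is chosen and why $(**)$ suffices uniformly in $i$, part (1) is not established.

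In part (2) your chamber analysis is sound (your inequalities do pin down the unique chamber: a subset is long exactly when it contains two of $\{1,2,3\}$), but two of your claims about the exceptional chamber need repair. First, reducing to Lemma~\ref{Lemma_for_quadrilateral}(1) can only show that certain \emph{pairings} of $Ch(1)$ vanish; for $n>4$ this does not show the class itself is zero. The correct one-line observation is that in this chamber both nice manifolds $(12)$ and $(1\overline{2})$ are empty (a frozen edge of length $\approx 2$, respectively a polygon dominated by $l_3\approx 1$, violates the closing condition), so the cycle representing $Ch(1)$ is literally zero; equivalently, the defining section is globally nonvanishing. Second, your claim that $Ch(i)\neq0$ for $i>3$ ``follows because the argument of part (1) only needed $(**)$ for some admissible choice of $j$'' cannot be right as stated: $(**)$ fails in this chamber, and the maximal short set containing $1$ has complement $\{2,3\}$ of size two, so the part-(1) construction breaks down. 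A separate argument is needed; the paper computes the top product $Ch(4)\smile\cdots\smile Ch(n)=2^{n-3}\neq0$ (writing $Ch(i)=(i1)+(1\overline{i})$ and expanding via Proposition~\ref{computation_rules}), which forces each factor to be nonzero.
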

\begin{proof}(1) Take a maximal (by inclusion) set $I$  containing $1$ with the property $\sum_{i\in I} l_i<\sum_{i\notin I} l_i.$  \footnote{Such a set may be not unique; take any.}  The condition $  (**) $  implies that its complement has at least three elements. Make all the edges from $I$ codirected and freeze them together to a single edge. Also freeze (if necessary) some of the remaining edges to get a flexible $4$-gon out of the initial $n$-gon. It is convenient to think that the edges that get frozen are consecutive ones. We get a new flexible polygon $L'$ whose moduli space $M_3(L')$ embeds in $M_3(L)$. For a given $i$ choose $j\in [n]$ in such a way that the edges  $i$ and $j$ are not frozen together, and write $Ch(i)=(ij)-(i\,\overline{j})$.

(a) If none of $i,j$ is frozen with the edge  $1$, then $(i\,\overline{j})$  does not intersect $M_3(L')$, whereas $(i{j})$  intersects $M_3(L')$ transversally at exactly one point.

(b) If one of $i,j$ is frozen with the edge  $1$, then $(i{j})$  does not intersect $M_3(L')$, whereas $(i\,\overline{j})$ intersects $M_3(L')$  transversally at exactly one point.

In both cases the product of $Ch(i)$ with the cocycle $M_3(L')$ is non-zero, so the proof is completed.

(2) Clearly, $Ch(1)=(12)-(1\overline{2})=0$  since both summands are empty manifolds.
Further, $$Ch(4)\smile...\smile Ch(n)=\left[(41)+(1\overline{4})\right]\smile...\smile \left[(n1)+(1\overline{n})\right]=2^{n-3}\neq 0,$$ which proves the statement.
\end{proof}

\section{Monomials in Euler and Chern classes}\label{SecMon}

Let us start with small examples.
	Table \ref{Tab_Pentagon1} represents  the multiplication table for the five Chern classes for the flexible pentagon $L=(3,1, 1,1,1)$.
	
	\begin{table}[h] \caption{Multiplication table for  $L=(3,1, 1,1,1)$}
		\label{Tab_Pentagon1}
		\begin{tabular}{cccccc}
			& $Ch(1)$ & $Ch(2)$  & $Ch(3)$ & $Ch(4)$  & $Ch(5)$   \\
			$Ch(1)$ & 1 & -1 & -1 & -1& -1\\
			$Ch(2)$ & -1 & 1 & 1 & 1& 1\\
			$Ch(3)$ & -1 & 1 & 1 & 1& 1\\
			$Ch(4)$ & -1 & 1 & 1 & 1& 1\\
			$Ch(5)$ & -1 & 1 & 1 & 1 & 1\\
		\end{tabular}
	\end{table}
Here is a detailed computation of $Ch(1)\smile Ch(2)$:
$$Ch(1)\smile Ch(2)=[(13)+(\overline{1}3)]\smile [(23)+(\overline{2}3)]=$$
$$(123)+(1\overline{2}3)+(\overline{1}23)+(\overline{1}\overline{2}3)=0+0-1+0=-1.$$
	
Tables \ref{Tab_Pentagon2}, \ref{Tab_Pentagon3}, \ref{Tab_Pentagon4}, \ref{Tab_Pentagon5}, \ref{Tab_Pentagon6} represent  the multiplication tables for all the remaining cases of flexible pentagons (listed in  \cite{Zvonk}).
	
	\begin{table}[h] \caption{Multiplication table for  $L=(2,1, 1,1, \varepsilon)$}
		\label{Tab_Pentagon2}
		\begin{tabular}{cccccc}
			& $Ch(1)$ & $Ch(2)$  & $Ch(3)$ & $Ch(4)$  & $Ch(5)$   \\
			$Ch(1)$ & 0 & 0 & 0 & 0& -2\\
			$Ch(2)$ & 0 & 0 & 0 & 0& 2\\
			$Ch(3)$ & 0 & 0 & 0 & 0& 2\\
			$Ch(4)$ & 0 & 0 & 0 & 0& 2\\
			$Ch(5)$ & -2 & 2 & 2 & 2 & 0\\
		\end{tabular}
	\end{table}

\begin{table}[h] \caption{Multiplication table for  $L=(3,2, 2, 1, 1)$.}
	\label{Tab_Pentagon3}
	\begin{tabular}{cccccc}
		& $Ch(1)$ & $Ch(2)$  & $Ch(3)$ & $Ch(4)$  & $Ch(5)$   \\
		$Ch(1)$ & -1 & 1 & 1 & -1& -1\\
		$Ch(2)$ & 1 & -1 & -1 & 1& 1\\
		$Ch(3)$ & 1 & -1 & -1 & 1& 1\\
		$Ch(4)$ & -1 & 1 & 1 & -1& 3\\
		$Ch(5)$ & -1 & 1 & 1 & 3 & -1\\
	\end{tabular}
\end{table}

\begin{table}[h]\caption{Multiplication table for $L=(2, 2,1,1,1)$.}
	\label{Tab_Pentagon4}
	\begin{tabular}{cccccc}
		 & $Ch(1)$ & $Ch(2)$  & $Ch(3)$ & $Ch(4)$  & $Ch(5)$   \\
		$Ch(1)$ & -2 & 2 & 0 & 0& 0\\
		$Ch(2)$ & 2 & -2 & 0 & 0& 0\\
		$Ch(3)$ & 0 & 0 & -2 & 2& 2\\
		$Ch(4)$ & 0 & 0 & 2& -2& 2\\
		$Ch(5)$ & 0 & 0 & 2 & 2& -2\\
	\end{tabular}
\end{table}

	\begin{table}[h]\caption{Multiplication table for $L=(1,1,1,1,1)$.}
		\label{Tab_Pentagon5}
		\begin{tabular}{cccccc}
			& $Ch(1)$ & $Ch(2)$  & $Ch(3)$ & $Ch(4)$  & $Ch(5)$   \\
			$Ch(1)$ & -3 & 1 & 1 & 1& 1\\
			$Ch(2)$ & 1 & -3 & 1 & 1& 1\\
			$Ch(3)$ & 1 & 1 & -3 & 1& 1\\
			$Ch(4)$ & 1 & 1 & 1 & -3& 1\\
			$Ch(5)$ & 1 & 1 & 1 & 1& -3\\
		\end{tabular}
	\end{table}
	
\begin{table}[h] \caption{Multiplication table   for  $L=(1,1, 1,\varepsilon, \varepsilon)$}
		\label{Tab_Pentagon6}
		\begin{tabular}{cccccc}
			& $Ch(1)$ & $Ch(2)$  & $Ch(3)$ & $Ch(4)$  & $Ch(5)$   \\
			$Ch(1)$ & 0 & 0 & 0 & 0& 0\\
			$Ch(2)$ & 0 & 0 & 0 & 0& 0\\
			$Ch(3)$ & 0 & 0 & 0 & 0& 0\\
			$Ch(4)$ & 0 & 0 & 0 & 0& 4\\
			$Ch(5)$ & 0 & 0 & 0 & 4 & 0\\
		\end{tabular}
	\end{table}

\medskip
\newpage
\textbf{Remarks.} In each of the tables, the parity of all the entries is one and the same. Indeed, modulo $2$ these computations are the computations of the squared Euler class.

Table \ref{Tab_Pentagon6}  illustrates  the  case $2$ of Proposition \ref{PropNonVanish}.

\subsection{First computation of $e^{n-3}$}

If the dimension of the ambient space is $2$, all the tautological linear bundles are isomorphic, so we have the unique  top degree monomial
 $e^{n-3} \in \mathbb{Z}/2$. Let us compute it using rules from  Proposition \ref{rules}:$$e^2=[(12)+(1\overline{2})]\smile [(23)+(2\overline{3})]=(123)+(1\overline{2}\overline{3})+(12\overline{3})+(1\overline{2}{3}),$$

$$e^3=e^2\smile [(34)+(3\overline{4})]=(1234)+(1\overline{2}34)+(12\overline{3}4)+$$$$(123\overline{4})+(1\overline{23}4)+
(12\overline{34})+(1\overline{2}3\overline{4})+(1\overline{234}).$$

Proceeding this way one concludes:

\begin{prop} \label{PropEulMon}  \begin{enumerate}
                 \item The top power of the Euler class $e^{n-3}$  (as an element of $\mathbb{Z}_2$)
equals the number of triangular configurations of  the flexible polygon $L$ such that all the edges $1,...,n-2$ are parallel.
                 \item Choose any three vertices of  the flexible polygon $L$. Let them be, say, $q_i,q_j,$ and $q_k$ for some $i<j<k$.
                 The top power of the Euler class $e^{n-3}$  
equals the number of triangular configurations of  the flexible polygon $L$  with the vertices $i,j$, and $k$. More precisely, we count  configurations  such that
\begin{enumerate}
  \item the edges $i+1,...,j$ are parallel,
  \item  the edges $j+1,...,k$ are parallel,
  \item  the edges $k+1,...,n$  and $1,...,i$ are parallel.\qed
\end{enumerate}

               \end{enumerate}

\end{prop}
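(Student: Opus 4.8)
The plan is to evaluate $e^{n-3}$ directly from the presentation $e=(k,k+1)_{2,L}+(k,\overline{k+1})_{2,L}$ (Theorem~\ref{ThmEulerChern}(1,a), valid for every pair of indices) by iterating the multiplication rules of Proposition~\ref{computation_rules}, and then to read the answer off combinatorially. For part (1) I would first prove, by induction on $m$, that
$$e^{m}=\prod_{k=1}^{m}\bigl[(k,k+1)+(k,\overline{k+1})\bigr]=\sum_{\varepsilon}\ (1\,\varepsilon_{2}\,2\,\varepsilon_{3}\,3\cdots\varepsilon_{m+1}(m+1))_{2,L},$$
where $\varepsilon$ ranges over the $2^{m}$ sign patterns on $\{2,\dots,m+1\}$ (each $\varepsilon_{t}$ recording whether index $t$ carries a bar), every pattern occurring exactly once; here and throughout we work over $\mathbb{Z}_2$, so all the signs produced by rule (2) disappear. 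The inductive step is the computation already displayed for $e^{2}$ and $e^{3}$: multiplying a term on $\{1,\dots,m+1\}$ by $(m+1,m+2)$ or by $(m+1,\overline{m+2})$ is governed by rule (4) or rule (5), after applying rule (2) if necessary so that the common index $m+1$ sits on the matching side; it appends the new index $m+2$ with its two possible orientations. Taking $m=n-3$ gives $e^{n-3}=\sum_{\varepsilon}(1\,\varepsilon_{2}2\cdots\varepsilon_{n-2}(n-2))_{2,L}$.

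The next step is to interpret each summand geometrically. By the definition of nice submanifolds, $(1\,\varepsilon_{2}2\cdots\varepsilon_{n-2}(n-2))_{2,L}$ is the image of the moduli space of the $3$-gon obtained by freezing the consecutive edges $1,\dots,n-2$ into a single edge; the side lengths of this $3$-gon are $\lambda_{\varepsilon}=|l_{1}\pm l_{2}\pm\cdots\pm l_{n-2}|$ (signs dictated by $\varepsilon$), $l_{n-1}$, and $l_{n}$. Hence this cocycle is the $\mathbb{Z}_2$-class of $M_{2}(\lambda_{\varepsilon},l_{n-1},l_{n})$, which is a single point when the triangle inequality holds strictly and is empty otherwise; the collinear borderline case is excluded by genericity. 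Therefore $e^{n-3}$ equals, modulo $2$, the number of patterns $\varepsilon$ for which $(\lambda_{\varepsilon},l_{n-1},l_{n})$ is a nondegenerate triangle. To finish (1) I would check that $\varepsilon\mapsto(\text{the resulting configuration})$ is a bijection onto the set of triangular configurations of $L$ with edges $1,\dots,n-2$ parallel: such a configuration has $u_{m}=\pm u_{1}$ for all $m\le n-2$, which recovers $\varepsilon$ uniquely, and conversely each admissible $\varepsilon$ determines such a configuration uniquely up to $O(2)$.

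Part (2) requires only a change in the order in which edges are frozen. The three chosen vertices $q_{i},q_{j},q_{k}$ partition the cyclically ordered set of edges into three consecutive arcs $A,B,C$ with $|A|+|B|+|C|=n$. I would run the telescoping computation of the previous paragraph separately inside each arc, obtaining $e^{|A|-1},e^{|B|-1},e^{|C|-1}$ as sums of one-block nice submanifolds supported on $A$, on $B$, on $C$ respectively, and then form the product $e^{n-3}=e^{|A|-1}\smile e^{|B|-1}\smile e^{|C|-1}$. Since $A,B,C$ are pairwise disjoint, rule (3) turns every cross term into the corresponding formal product, so a typical summand is a three-block nice submanifold in which the edges within each arc are mutually parallel, i.e. the image of the moduli space of the $3$-gon whose three sides are the three frozen arcs. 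As before this is a point or empty, never degenerate by genericity, and the nonempty summands correspond bijectively to the triangular configurations with vertices $q_{i},q_{j},q_{k}$ described by (a)--(c). Part (1) is the special case $A=\{1,\dots,n-2\}$, $B=\{n-1\}$, $C=\{n\}$.

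\textbf{Where the work lies.} The cup-product computation is pure bookkeeping with Proposition~\ref{computation_rules}; the genuine content is the dictionary ``top monomial in Euler classes $\leftrightarrow$ count of triangular configurations,'' and the single point that truly uses the hypotheses is genericity. One must be sure that no sign pattern produces a degenerate (collinear) $3$-gon --- which would make the corresponding nice submanifold positive-dimensional rather than a point and spoil the count --- and this is exactly what the standing assumption that every signed sum $\pm l_{1}\pm\cdots\pm l_{n}$ is nonzero provides, as it rules out the borderline equalities $\lambda=l_{n-1}+l_{n}$ and $\lambda=|l_{n-1}-l_{n}|$ (and their analogues in the three-arc version). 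That distinct patterns give distinct configurations needs no genericity, since a triangular configuration with the prescribed parallel edges determines each $u_{m}$ as $+u_{1}$ or $-u_{1}$ and hence recovers the pattern. A minor additional point is to confirm that each nice submanifold, viewed as a plain subset of $M_{2}(L)$, is literally the set of triangular configurations of the prescribed combinatorial type, which is immediate from the definition of nice submanifolds as images of moduli spaces of frozen linkages.
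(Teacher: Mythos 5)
Your proposal is correct and follows essentially the same route as the paper: the paper also expands $e^{n-3}=\prod[(k\,k+1)+(k\,\overline{k+1})]$ via the multiplication rules of Proposition \ref{computation_rules} into the sum of all $2^{n-3}$ one-block nice manifolds on $\{1,\dots,n-2\}$ and reads off the count of (point or empty) triangular configurations, leaving part (2) to the analogous arc-by-arc pairing. You merely make explicit the induction, the genericity argument excluding degenerate triangles, and the bijection with sign patterns, all of which the paper compresses into ``proceeding this way one concludes.''
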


\begin{Ex}   Let $L=(1,1,\dots,1)$, that is, we have a flexible equilateral $(2s+3)$-gon.
The number of triangles indicated in Proposition \ref{PropEulMon}, (1) is $\binom{2s+1}{s}$. By Luke theorem,  modulo $2$ it equals $$ \prod_{t \geq 0} (s_{t} - s_{t-1} +1),$$ where $\{s_{t}\}_{t\geq 0}$ are digits of the binary  numeral system representation of $s$.

Finally, we get $$e^{2s}=\left\{
                           \begin{array}{ll}
                             1, & \hbox{if } s=2^r-1;\\
                             0, & \hbox{otherwise.}
                           \end{array}
                         \right.
$$
\end{Ex}

\subsection{Second computation of $e^{n-3}$}
Now we make use of Theorem \ref{ThmEulerChern}(1,b) :
$$ e = (i\; i+1)+(i+1\; i+2)+(i\; i+2).$$

\begin{prop}\begin{enumerate}
              \item We have
$$e^{k} = \sum_{T_1\cup T_2= [k+2]}  (T_{1})\cdot (T_{2}),$$
where the  sum runs over all unordered  partitions  of the set $[k+2]=\{1,...,k+2\}$ into  two nonempty sets $T_{1}$ and $T_{2}$.
              \item In particular,
$$e^{n-3} = \sum_{T_1\cup T_2=  [n-1]} (T_{1})\cdot (T_{2}),$$ where the  sum runs over all unordered partitions of  $[n-1]$ into  two nonempty disjoint sets $T_{1}$ and $T_{2}$.
            \end{enumerate}

\end{prop}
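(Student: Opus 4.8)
The plan is to prove (1) by induction on $k$ and then read off (2) as the case $k=n-3$, where $[k+2]=[n-1]$. Throughout we work in $H^*(M_2(L),\mathbb{Z}_2)$, so Proposition \ref{computation_rules} applies with all signs suppressed (rules (4) and (5) coincide), and we use the convention $(i)_{2,L}=M_2(L)$, $(i)\cdot(T)=(T)$.

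\emph{Base case $k=1$.} The claimed identity reads $e=(1)(23)+(2)(13)+(3)(12)=(23)+(13)+(12)$, which is exactly Theorem \ref{ThmEulerChern}(1,b) with $\{j,k,r\}=\{1,2,3\}$.

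\emph{Inductive step.} Assume $e^{k-1}=\sum_{S_1\sqcup S_2=[k+1]}(S_1)(S_2)$, the sum over unordered partitions of $[k+1]$ into two nonempty parts, and compute $e^{k}=e^{k-1}\smile e$ term by term. The key device is that, since $\smile$ is bilinear and every expression below genuinely represents $e$, one is free to use a \emph{different} representative of $e$ for each summand $(S_1)(S_2)$: namely, choosing any $a\in S_1$ and $b\in S_2$, take $e=(ab)+(b\,(k{+}2))+(a\,(k{+}2))$ (Theorem \ref{ThmEulerChern}(1,b) with $\{j,k,r\}=\{a,b,k{+}2\}$). Applying rules (3) and (4) of Proposition \ref{computation_rules} — the factor $(a\,(k{+}2))$ merely attaches the new index $k{+}2$ to the block containing $a$, while $(ab)$ glues the two blocks — one gets
\begin{gather*}
(S_1)(S_2)\smile (a\,(k{+}2)) = (S_1\cup\{k{+}2\})(S_2),\qquad
(S_1)(S_2)\smile (b\,(k{+}2)) = (S_1)(S_2\cup\{k{+}2\}),\\
(S_1)(S_2)\smile (ab) = ([k{+}1]),
\end{gather*}
and these outputs do not depend on the choice of $a,b$. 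Hence $(S_1)(S_2)\smile e=(S_1\cup\{k{+}2\})(S_2)+(S_1)(S_2\cup\{k{+}2\})+([k{+}1])$.

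\emph{Assembling the sum.} Summing over all unordered partitions of $[k+1]$: the first two families of terms range \emph{bijectively} over all unordered partitions $\{T_1,T_2\}$ of $[k+2]$ in which neither block is the singleton $\{k{+}2\}$ (the inverse map deletes $k{+}2$ from whichever block contains it). The number of partitions of $[k+1]$ being $2^{k}-1$, which is odd, the last family contributes $([k{+}1])=([k{+}1])(k{+}2)$ over $\mathbb{Z}_2$ — precisely the one remaining partition $\{[k{+}1],\{k{+}2\}\}$. Therefore $e^{k}=\sum_{T_1\sqcup T_2=[k+2]}(T_1)(T_2)$, which closes the induction; statement (2) is the instance $k=n-3$.

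\emph{Main obstacle.} The substantive point is the final bookkeeping: verifying that "insert $k{+}2$ into one of the two blocks" is a bijection onto the partitions of $[k+2]$ with no block equal to $\{k{+}2\}$, and that the parity count $2^{k}-1\equiv 1\pmod 2$ exactly reinstates the single missing partition. The three multiplication identities above are a routine application of Proposition \ref{computation_rules}, and the freedom to vary the representative of $e$ per summand is what makes the induction clean.
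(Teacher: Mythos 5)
Your proof is correct, and I verified the three multiplication identities and the final bookkeeping: the insertion map is indeed a bijection onto the partitions of $[k+2]$ whose blocks are both different from $\{k+2\}$, and the $2^k-1\equiv 1\pmod 2$ count supplies exactly the missing partition $\{[k+1],\{k+2\}\}$ (your $k=2$ instance reproduces the paper's displayed expansion of $e^2$ term for term). However, your execution of the inductive step is genuinely different from the paper's. The paper multiplies $e^k$ by the single fixed representative $(k{+}1\;k{+}2)+(k{+}1\;k{+}3)+(k{+}2\;k{+}3)$; since the indices $k+1$ and $k+2$ already occur in the summands of $e^k$, this forces self-intersections of the form $(ij)\smile(ij)$, which the paper resolves with its Lemma~\ref{transposition_square} — a separate geometric perturbation argument (pushing $u_j$ off itself in a direction determined by an orientation of a triple). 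You sidestep that lemma entirely by exploiting bilinearity to choose, for each summand $(S_1)(S_2)$, a representative of $e$ adapted to it (one index in each block plus the fresh index $k+2$), so that only the transversal rules (3) and (4) of Proposition~\ref{computation_rules} are ever invoked. The trade-off is that your argument needs the parity count $2^k-1\equiv 1\pmod 2$, which is harmless here since the whole statement lives over $\mathbb{Z}_2$, whereas the paper's route makes the self-intersection lemma explicit (and that lemma is of independent interest for other computations in the paper). Both are valid; yours is arguably the cleaner derivation of this particular proposition, the paper's is the more reusable toolkit.
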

\begin{proof}

Let us first prove the following lemma:

\begin{lemma} \label{transposition_square}
We have
$$(ij)\smile (ij) = (ijk) + (ijl) + (ij)\cdot (kl) =  (ij)\smile e.$$
\end{lemma}
\textit{Proof of the lemma.} Perturb the manifold $(ij)$ by  keeping $u_i$ as it is and pushing $u_j$ in the direction defined by the orientation of the triple $(u_i, u_k, u_l)$. We arrive at a manifold $\widehat{(ij)}_{k,l}$ which represents the same cohomology class. The  manifolds $\widehat{(ij)}_{k,l}$  and  $(ij)$ intersect transversely, and their product is the subset of $(ij)$ where the above orientation is not defined. The last equation follows from the representation $e = (jk)+(jl)+(kl)$. The lemma is proven. \qed

Now we prove the proposition using induction by $k$. The base comes from Theorem \ref{ThmEulerChern}(1,b):
$$e = (12)+(13)+(23).$$
The last expression equals exactly all two-component partitions of the set $\{1,2,3\} = [3]$.

The induction step:
$$e^{k+1} = e^{k}\smile e =$$
$$= \left(\sum_{T_1\cup T_2=\{1, 2, \dots , k+2\}}(T_{1})\cdot (T_{2})\right) \smile \Big((k+1 \; k+2)+(k+1 \; k+3)+(k+2 \; k+3)\Big).$$
The statement now follows from Lemma \ref{transposition_square}.

Let us illustrate the second induction step:
\begin{align*}
e^2 = &[(12)+(23)+(13)] \smile [(23)+(24)+(34)] =\\
&(12)\smile (23)+(12)\smile (24)+(12)\smile (34)+(23)\smile (23)+(23)\smile (24)+\\
&+(23)\smile (34)+(13)\smile (23)+(13)\smile (24)+(13)\smile (34) =\\
&(123)+(124)+(12)\smile (34)+(23)\smile (23)+(234)+\\
&+(234)+(123)+(13)\smile (24)+(134) =\\
&(124)+(12)\cdot (34)+(234)+ (123)+(14)\cdot (23)+\\
&+(13)\cdot (24)+(134).\\
\end{align*}
 Each summand in the last expression corresponds to a two-element partition of the set $\{1,2,3,4\}$.
\end{proof}

\medskip

\subsection{Monomials in  Chern classes} From now on, we consider $\mathbb{R}^{3}$ as the ambient space.  Since we have  different representations for the Chern class, there are different ways of computing the intersection numbers. They lead to combinatorially different answers for one and the same monomial and one and the same $L$, but of course, these different counts give one and the same number.

The canonical orientation of $M_3(L)$ identifies $H^{2n-6}(M_3(L),\mathbb{Z})$  with $\mathbb{Z}$. Therefore the top monomials in Chern classes  can be viewed as integer numbers.

\begin{thm}
The top power of the Chern class $Ch^{n-3}(1)$ equals the signed number of triangular configurations of the flexible polygon $L$ such that all the edges $1,...,n-2$ are parallel.\footnote{Remind that the edges  $\{l_iu_i\}_{i\in I}$ are \textit{parallel}  if $u_i=\pm u_j$ for $i,j \in I$.}  Each such triangle  represents a one-point nice manifold $(I\overline{J})$  for some partition $[n-2]=I\cup J$ { with $1 \in I$}.  Each triangle is counted with the sign

$$(-1)^N \cdot \epsilon ,$$

where  $N= |J|$  is the cardinality of $J$,  and

$$\epsilon= \left\{
             \begin{array}{ll}
               1, & \hbox{if \ \  }  \sum_I l_i> \sum_Jl_j;\\
               -1, & \hbox{otherwise.}
             \end{array}
           \right.$$
The expressions for the other top powers $Ch^{n-3}(i)$ come from renumbering.
\end{thm}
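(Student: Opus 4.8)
The plan is to compute $Ch(1)^{n-3}$ directly using the representation $Ch(1) = (1j) - (1\,\overline{j})$ from Theorem \ref{ThmEulerChern}(2), choosing a convenient index $j$ at each stage of an inductive (or one-shot) expansion, and then applying the multiplication rules from Proposition \ref{rules} to collect terms. First I would write
$$Ch(1)^{n-3} = \bigl[(12)-(1\overline{2})\bigr]\smile\bigl[(13)-(1\overline{3})\bigr]\smile\cdots\smile\bigl[(1\,n-2)-(1\,\overline{n-2})\bigr].$$
Each factor uses a new index, so when I expand the product I get $2^{n-3}$ terms, one for each way of assigning each of $2,3,\dots,n-2$ either to $I$ (the unbarred slot, coming from a $+(1k)$ choice) or to $J$ (the barred slot, from a $-(1\overline{k})$ choice). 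By repeated application of rules (4) and (5) of Proposition \ref{rules} — which govern merging two factors that share exactly one common index, here always the index $1$ — each such term collapses to $\pm(I\overline{J})$ where $[n-2] = I \cup J$ and $1 \in I$. Rule (4) contributes no sign when the shared index sits in the unbarred part of both factors, while rule (5) contributes a $-1$ each time the shared index sits in the barred part of both factors; tracking these carefully shows that the sign accumulated from the rules alone is $(-1)^{|J|-1}$ (one factor of $-1$ for each barred index beyond... ) combined with the explicit $-1$'s from the $-(1\overline{k})$ choices, giving a total combinatorial sign of $(-1)^{|J|}$. So $Ch(1)^{n-3} = \sum_{[n-2]=I\cup J,\ 1\in I} (-1)^{|J|}\,(I\overline{J})$.

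Next I would evaluate each $(I\overline{J})$ as an element of $H^{2n-6}(M_3(L),\mathbb{Z}) \cong \mathbb{Z}$. The nice manifold $(I\overline{J})$ with $I\cup J = [n-2]$ is, by definition, the set of configurations in which the edges $1,\dots,n-2$ are all parallel (those in $I$ codirected with edge $1$, those in $J$ oppositely directed), i.e. it is the configuration space of a quadrilateral whose sides are the vector $\sum_{i\in I}l_i - \sum_{j\in J}l_j$ (the frozen block), $l_{n-1}$, $l_n$, and — wait, that block together with $l_{n-1}$ and $l_n$ is only a triangle, so the space is a single point (a triangular configuration) when it is nonempty, and empty otherwise. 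The recipe being $n-3$-fold self-intersection, the relevant count is really the transverse intersection of $n-3$ perturbed copies, and one checks via the same reasoning as in Proposition \ref{PropNonVanish} that this point is counted transversally exactly once. Its contribution is $\pm 1$ according to whether the relative orientation of $(I\overline{J})$ agrees with the canonical orientation of the ambient manifold restricted there; by the definition of relative orientation given just before Proposition \ref{rules}, this sign is exactly the $\epsilon$ in the statement, equal to $+1$ iff $\sum_I l_i > \sum_J l_j$. Combining, each surviving triangle contributes $(-1)^{|J|}\epsilon$, which is the claimed formula with $N = |J|$; triangles that do not exist (empty nice manifolds) contribute $0$, consistent with "signed number of triangular configurations."

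The main obstacle I anticipate is the careful bookkeeping of signs: there are three separate sources — the explicit minus signs in $Ch(1) = (1j)-(1\overline{j})$, the signs produced by rule (5) when merging factors sharing a barred index, and the orientation sign $\epsilon$ coming from whether $(*)$ holds — and it is easy to double-count or miss one. I would handle this by first doing $n=4$ and $n=5$ entirely by hand, cross-checking against Lemma \ref{Lemma_for_quadrilateral} and Tables \ref{Tab_Pentagon1}--\ref{Tab_Pentagon6}, to pin down the constant, and then proving the general sign by induction on $n-3$: assume $Ch(1)^{k} = \sum_{[k+1]=I\cup J,\ 1\in I}(-1)^{|J|}(I\overline{J})$ as classes in the appropriate moduli space, multiply by $[(1\,k+2)-(1\,\overline{k+2})]$, apply rule (4) to the $+(1\,k+2)$ branch (no new sign, $k+2$ joins $I$) and rule (5) to the $-(1\,\overline{k+2})$ branch (the explicit $-1$ from $Ch$ cancels against... no: the $-1$ from rule (5) combines with the explicit $-1$ to give $+1$, and $k+2$ joins $J$ so $(-1)^{|J|}$ picks up one more factor of $-1$, net $-1$ — consistent). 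A clean way to avoid errors in the merging step is to prove once and for all the auxiliary identity that $(1a_1)\smile(1a_2)\smile\cdots = (1\,a_1a_2\cdots)$ with no sign and $(1\overline{b_1})\smile(1\overline{b_2})\smile\cdots = (-1)^{(\#\,\text{barred})-1}(1\,\overline{b_1b_2\cdots})$, which follows by iterating rules (4) and (5), and then assemble the mixed product. Finally I would remark that the last sentence of the theorem — the statement for $Ch(i)$ with $i\neq 1$ — is immediate since the moduli space $M_3(L)$ is insensitive to the ordering of the edge lengths (Definition \ref{SecondDfn}), so one simply renames the roles of the indices.
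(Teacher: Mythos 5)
Your proposal is correct and follows essentially the same route as the paper: expand $\prod_{j=2}^{n-2}\bigl[(1j)-(1\,\overline{j})\bigr]$ via the computation rules into $\sum_{I\cup J=[n-2],\ 1\in I}(-1)^{|J|}(I\overline{J})$, then evaluate each zero-dimensional nice manifold as $\epsilon=\pm1$ according to whether its relative orientation agrees with the canonical one. One correction to your sign bookkeeping: rule (5) of Proposition~\ref{computation_rules} is never invoked here, since the shared index $1$ sits in the unbarred part of every factor $(1j)$ and $(1\,\overline{j})$, so all merges go through rule (4) with no sign; consequently your auxiliary identity should read $(1\overline{b_1})\smile\cdots\smile(1\overline{b_m})=(1\,\overline{b_1\cdots b_m})$ with no $(-1)^{m-1}$, and the total sign $(-1)^{|J|}$ comes entirely from the explicit minus signs in $Ch(1)=(1j)-(1\,\overline{j})$ --- which is the answer you reach anyway.
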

\begin{proof} We have $$Ch(1)=(12)-(1\overline{2})=(13)-(1\overline{3})=(14)-(1\overline{4}), \hbox{ etc.}$$ Therefore

 $$Ch^2(1)= [(12)-(1\overline{2})]\smile [(13)-(1\overline{3})]= (123)-(12\overline{3})-(1\overline{2}3)+(1\overline{23}).$$

 $$Ch^3(1)= [(14)-(1\overline{4})]\smile Ch^2 ( 1)=$$$$(1234)-(1\overline{2}34)-(12\overline{3}4)-(123\overline{4})+(12\overline{34})+(13\overline{24})+(14\overline{23})-(1\overline{234}).$$
At the final stage we arrive at a number of zero-dimensional nice manifolds. Each comes with the sign $(-1)^N$.
Further, each of them corresponds either to  $1$  if the relative orientation agrees with the canonical orientation, or to $-1$.
This gives us $\epsilon$.
\end{proof}

\medskip

Figure \ref{FigInters1}  represents all the  triangular configurations that arise in computation of $Ch^2(1)$  for the  equilateral pentagon. The values of $N$ and $\epsilon$  are:  \newline (a) $N=2$, $\epsilon =-1$,  (b) $N=1$, $\epsilon =1$,  (c) $N=1$, $\epsilon =1$.

\begin{figure}[h]
\centering \includegraphics[width=10 cm]{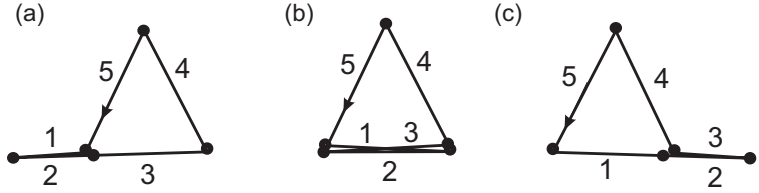}
\caption{Triangular configurations of the equilateral pentagon.}\label{FigInters1}
\end{figure}

\medskip

\begin{Ex} For the equilateral 2k+3-gon  we have  $$Ch^{2k}(i) =  (-1)^{k} \binom{2k}{k}-(-1)^{k+1} \binom{2k}{k-1}=(-1)^k\cdot\binom{2k+1}{k}.$$ Indeed, we count equilateral triangle configurations with either $k$ or $k-1$  edges codirected with the first edge.
\end{Ex}

For a three-term monomial the above technique gives:

             \begin{thm} Assume that  $d_1+d_2+d_3=n-3.$ The monomial

 $$Ch^{d_1}(1)\smile Ch^{d_2}(d_1+2)\smile Ch^{d_3 }(d_1+d_2+3)$$ equals the signed number of triangular configurations of  the flexible polygon $L$ such that \begin{enumerate}
  \item the edges $1,...,d_1+1$ are parallel,
  \item  the edges $d_1+2,...,d_1+d_2+2$ are parallel,
  \item  the edges $d_1+d_2+3,...,n$   are parallel.
\end{enumerate}

Each triangle is counted with the sign

$$(-1)^{N_1+N_2+N_3} \cdot \epsilon_1 \cdot \epsilon_2\cdot \epsilon_3,$$

{where  $N_i$   and $\epsilon_i$ refer to the $i$-th side of the triangular configuration.}

More precisely, each triangle represents  a one-point nice manifold  \newline $(I_1\overline{J}_1)\cdot(I_2\overline{J}_2)\cdot(I_3\overline{J}_3)$
with $1\in I_1,\ d_1+2 \in I_2,\ \ d_1+d_2+2 \in I_3$.
\newline Here $N_i=|J_i|,$ and $$\epsilon_i= \left\{
             \begin{array}{ll}
               1, & \hbox{if \ \  }  \sum_{I_i} l_k> \sum_{J_i}l_k;\\
               -1, & \hbox{otherwise.}
             \end{array}
           \right.$$
           The expressions for the other three-term top monomials come from renumbering.
\qed

\end{thm}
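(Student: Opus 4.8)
The plan is to reduce the three-term statement to the one-term statement already proven (the theorem computing $Ch^{n-3}(1)$), using the multiplication rules of Proposition \ref{computation_rules}. First I would expand each Chern class using the freedom in Theorem \ref{ThmEulerChern}(2): pick the representatives adapted to the block structure, namely
$$Ch(1)=(1\,2)-(1\,\overline 2),\quad Ch(d_1+2)=(d_1+2\ \ d_1+3)-(d_1+2\ \ \overline{d_1+3}),$$
and so on, and more generally inside the first block write $Ch(1)=(1\ j)-(1\ \overline j)$ for $j$ ranging over $\{2,\dots,d_1+1\}$ so that multiplying $d_1$ such factors telescopes. Iterating the computation exactly as in the proof of the one-term theorem (successively cupping with $(1\ k)-(1\ \overline k)$ for $k=2,\dots,d_1+1$ and using rules (4) and (5)) yields
$$Ch^{d_1}(1)=\sum_{[d_1+1]=I_1\sqcup J_1,\ 1\in I_1}(-1)^{|J_1|}(I_1\overline{J_1}),$$
and similarly for the other two factors over the index sets $\{d_1+2,\dots,d_1+d_2+2\}$ and $\{d_1+d_2+3,\dots,n\}$ — note these three index blocks are pairwise disjoint and their union is $[n]\setminus\{\text{the }n-3\text{ "base" vertices}\}$, wait, more precisely their union is $\{1,\dots,n\}$ minus nothing; one must be careful that the three blocks partition $[n]$ with $|I_1\cup J_1|=d_1+1$, $|I_2\cup J_2|=d_2+1$, $|I_3\cup J_3|=d_3+1$, summing to $n-3+3=n$, so indeed they partition $[n]$.

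Next I would multiply the three sums together. Since the three index blocks are disjoint, rule (3) applies: the cup product of $(I_1\overline{J_1})$, $(I_2\overline{J_2})$, $(I_3\overline{J_3})$ is simply the formal product $(I_1\overline{J_1})\cdot(I_2\overline{J_2})\cdot(I_3\overline{J_3})$, a nice manifold of the top dimension $2n-6$ (each factor has complex codimension corresponding to $|I\cup J|-1$ constraints, and $(d_1)+(d_2)+(d_3)=n-3$ gives the right count). Thus
$$Ch^{d_1}(1)\smile Ch^{d_2}(d_1+2)\smile Ch^{d_3}(d_1+d_2+3)=\sum(-1)^{N_1+N_2+N_3}(I_1\overline{J_1})\cdot(I_2\overline{J_2})\cdot(I_3\overline{J_3}),$$
the sum over all ways of partitioning each block into an $I$-part (containing the distinguished index) and a $J$-part. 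Geometrically the conditions defining such a zero-dimensional nice manifold say exactly: the edges in block one all lie on a common line (equivalently, the partition $I_1\cup J_1$ records which are codirected with edge $1$ and which are anti-directed), likewise for blocks two and three — which is precisely the "triangular configuration" described in the statement, its three sides being the three lines. Then I would invoke the orientation bookkeeping already set up before Proposition \ref{computation_rules}: a one-point nice manifold $(I_1\overline{J_1})\cdot(I_2\overline{J_2})\cdot(I_3\overline{J_3})$, taken with its relative orientation, equals $+1$ or $-1$ in $H^{2n-6}(M_3(L),\mathbb Z)\cong\mathbb Z$ according to whether the relative and canonical orientations agree, i.e. according to whether $(*)$ fails for an even number of the three factors — which is exactly $\epsilon_1\epsilon_2\epsilon_3$ with $\epsilon_i$ as defined. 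Combining the sign $(-1)^{N_1+N_2+N_3}$ from the expansion with $\epsilon_1\epsilon_2\epsilon_3$ from the orientation gives the claimed weight, and a nonempty such nice manifold contributes a genuine triangular configuration while an empty one contributes $0$, consistent with "signed number of triangular configurations."

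The routine part is the telescoping expansion and it is genuinely routine given the proof of the one-term case. The main obstacle — the only place needing real care — is verifying that rule (3) legitimately applies when multiplying the three block-sums, i.e. that after the telescoping each surviving term $(I_a\overline{J_a})$ really has its label contained in the $a$-th block with no shared indices across blocks, so that no further application of rules (4)/(5) (with their extra signs) is triggered; and dually, confirming that the zero-dimensional nice manifolds appearing are exactly the triangular configurations of the stated combinatorial type and that the orientation sign coming from the "even number of failures of $(*)$" convention matches $\epsilon_1\epsilon_2\epsilon_3$. Both points are bookkeeping rather than conceptual, but they are where an error would hide, so I would write them out explicitly; everything else then follows by the same argument as the one-term theorem applied blockwise.
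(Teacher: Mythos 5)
Your proposal is correct and is exactly the argument the paper intends: the paper omits the proof of this theorem (marking it \qed as following from ``the above technique''), namely the blockwise telescoping expansion $Ch^{d_a}=\sum_{I\sqcup J}(-1)^{|J|}(I\overline{J})$ from the one-term case, followed by multiplication of the three disjoint-label sums via rule (3) and the relative-versus-canonical orientation convention giving $\epsilon_1\epsilon_2\epsilon_3$. Your bookkeeping (the three blocks partition $[n]$, the product is zero-dimensional, and the signs combine as stated) checks out.
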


\medskip

Figure \ref{FigInters2}  represents two  triangular configurations that arise in computation of $Ch^2(1)\smile Ch^2(4)\smile Ch^2(7)$  for the  equilateral 9-gon. Here we have  \newline (a) $N_1=0 , N_2=1, N_3=0$, $\epsilon_1 =1,\epsilon_2 =1, \epsilon_3 =1$,  \newline (b) $N_1=1 , N_2=1, N_3=1$, $\epsilon_1 =1,\epsilon_2 =1, \epsilon_3 =1$.

\begin{figure}[h]
\centering \includegraphics[width=8 cm]{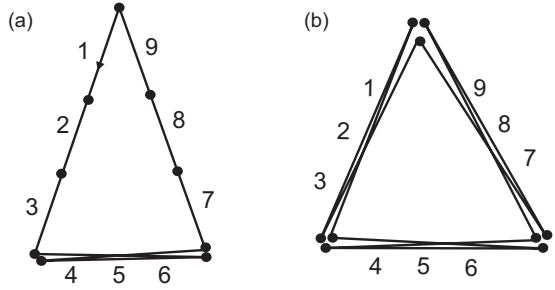}
\caption{Some triangular configurations of the equilateral $9$-gon.}\label{FigInters2}
\end{figure}
\medskip

The general case is:

\begin{thm}\label{main_theorem} Modulo renumbering any top monomial in Chern classes has the form $$Ch^{d_1}(1)\smile ...\smile Ch^{d_k}(k)$$ with $\sum_{i=1}^k d_i=n-3$  and $d_i \neq 0$ for $i=1,...,k$. Its value equals the signed number of triangular configurations of  the flexible polygon $L$ such that all the edges $1,...,n-2$ are parallel.  Each such triangle  represents a one-point nice manifold $(I\overline{J})$  for some partition $$[n-2]=I\cup J$$ with $k+1 \in I$. Each triangle is counted with the sign
	
	$$(-1)^N \cdot \epsilon ,$$
	
	where  $$N= |J|+\sum_{i \in J \cap [k]} d_i,$$  and
	
	$$\epsilon= \left\{
	\begin{array}{ll}
	1, & \hbox{if \ \  }  \sum_I l_i> \sum_Jl_j;\\
	-1, & \hbox{otherwise.}
	\end{array}
	\right.$$
	
\end{thm}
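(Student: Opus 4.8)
The plan is to expand the monomial directly via Proposition~\ref{computation_rules}, after fixing one common ``target edge''. By renumbering the edges we may assume the monomial is $Ch^{d_1}(1)\smile\dots\smile Ch^{d_k}(k)$ with $d_i\ge 1$ and $\sum d_i=n-3$; in particular $k\le n-3$, so the edge $m:=k+1$ exists and is distinct from $1,\dots,k$. By Theorem~\ref{ThmEulerChern}(2) I would write $Ch(i)=(i\,m)-(i\,\overline m)$ for \emph{every} $i\in[k]$, always with the same $m$. The point sets $(i\,m)$ and $(i\,\overline m)$ are disjoint ($u_i=u_m$ and $u_i=-u_m$ cannot both hold on the sphere), so all mixed products vanish and
$$Ch(i)^{\smile d_i}=(i\,m)^{\smile d_i}+(-1)^{d_i}\,(i\,\overline m)^{\smile d_i}.$$

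The key step — and the main obstacle — is the Chern-class analogue of Lemma~\ref{transposition_square}:
$$(i\,m)^{\smile 2}=(i\,m)\smile Ch(i),\qquad (i\,\overline m)^{\smile 2}=-\,(i\,\overline m)\smile Ch(i).$$
I would prove this by normal bundles: the self-intersection of a nice manifold $Z$ equals $\iota_*\bigl(e(\nu_Z)\bigr)$, and the normal bundle of $(i\,m)$ (resp.\ $(i\,\overline m)$) is the restriction of $E_{3,m}$ — the line along which $u_m$ can move off $u_i$ (resp.\ off $-u_i$). On $(i\,m)$ one has $u_i=u_m$, so $E_{3,m}|_{(i\,m)}\cong E_{3,i}|_{(i\,m)}$; on $(i\,\overline m)$ one has $u_i=-u_m$, and since the differential of the antipodal involution of $\mathbb{C}P^1$ is complex-\emph{anti}linear, $E_{3,m}|_{(i\,\overline m)}\cong\overline{E_{3,i}}|_{(i\,\overline m)}$, whose first Chern class is $-Ch(i)$ restricted. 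The projection formula then gives the two identities. (For labels meeting $\{n-2,n-1,n\}$ one first defreezes the edge $n$, exactly as in the proof of Proposition~\ref{computation_rules}(4).) The opposite signs here are precisely what will generate the term $\sum_{i\in J\cap[k]}d_i$ in $N$, so I would cross-check them against Lemma~\ref{Lemma_for_quadrilateral} and the pentagon tables. Iterating yields $(i\,m)^{\smile d_i}=(i\,m)\smile Ch(i)^{\smile(d_i-1)}$ and $(i\,\overline m)^{\smile d_i}=(-1)^{d_i-1}(i\,\overline m)\smile Ch(i)^{\smile(d_i-1)}$.

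Next comes the expansion. Since $\sum_{i}(d_i-1)=n-3-k=|\{k+2,\dots,n-2\}|$, choose pairwise disjoint $A_i\subseteq\{k+2,\dots,n-2\}$ with $|A_i|=d_i-1$. Writing $Ch(i)=(i\,a)-(i\,\overline a)$ for $a\in A_i$ and applying rule~(4) repeatedly (the common index is always $i$, sitting in the $I$-part, so no signs appear — as in the computation of $e^k$), one gets $Ch(i)^{\smile(d_i-1)}=\sum_{T\subseteq A_i}(-1)^{|T|}\bigl(\{i\}\cup(A_i\setminus T)\ \overline T\bigr)$. Substituting into $Ch(i)^{\smile d_i}$, attaching $m$ by rule~(4), and applying rule~(2) once in the $(i\,\overline m)$-branch to return $m$ to the $I$-part, each $Ch(i)^{\smile d_i}$ becomes a signed sum of nice manifolds with $m$ in the $I$-part, in which the index $i$ lies in the $I$-part in the first branch and in the $J$-part in the second branch, the latter carrying the extra sign $(-1)^{d_i-1}$. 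Finally, multiplying the $k$ factors: any two share only the index $m$, which lies in both $I$-parts, so rule~(4) applies with no sign. Collecting everything,
$$Ch^{d_1}(1)\smile\dots\smile Ch^{d_k}(k)=\sum_{[n-2]=I\sqcup J,\ k+1\in I}(-1)^{\,\varepsilon(I,J)}\,(I\,\overline J),$$
where the branches chosen are recorded by $I\cap[k]$, while $I\cap\{k+2,\dots,n-2\}=\bigcup_i(A_i\setminus T_i)$ and $J\cap\{k+2,\dots,n-2\}=\bigcup_i T_i$, and the accumulated exponent is
$$\varepsilon(I,J)=\sum_i|T_i|+\sum_{i\in J\cap[k]}(d_i-1)=\bigl(|J|-|J\cap[k]|\bigr)+\Bigl(\textstyle\sum_{i\in J\cap[k]}d_i-|J\cap[k]|\Bigr)\equiv |J|+\sum_{i\in J\cap[k]}d_i \pmod 2,$$
i.e.\ $\varepsilon(I,J)\equiv N\pmod 2$, as required.

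It remains to read off the geometry. The nice manifold $(I\,\overline J)$ with $I\sqcup J=[n-2]$ has real dimension $2n-6-2(n-3)=0$: it consists of exactly those configurations in which edges $1,\dots,n-2$ all lie on a single line, those in $I$ codirected and those in $J$ oppositely directed — that is, the triangular configurations with vertices $q_1,q_{n-1},q_n$ of the stated type — and for generic $L$ it is one point (present precisely when the triangle inequalities for the side lengths $|\sum_I l_i-\sum_J l_j|$, $l_{n-1}$, $l_n$ hold) or empty. Taken with its relative orientation, a one-point $(I\,\overline J)$ contributes $+1$ if $\sum_I l_i>\sum_J l_j$ and $-1$ otherwise — that is, $\epsilon$ — by the defining inequality $(*)$, while an empty one contributes $0$. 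Substituting into the displayed formula gives $\sum(-1)^N\epsilon$ over all triangular configurations of the stated type, which is the assertion. The expressions for the remaining top monomials follow by renumbering, as in the previous theorems.
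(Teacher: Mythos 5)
Your proposal is correct and, once unwound, is essentially the paper's own argument: both proofs represent the $d_i$ factors of $Ch(i)^{\smile d_i}$ by $d_i$ \emph{distinct} auxiliary indices, exactly one of which (your $m=k+1$, the paper's ``dashed'' target) is shared by all $k$ classes, and then expand via Proposition~\ref{computation_rules}; your sets $A_i$ play the role of the paper's $S_i$, and the resulting sign $(-1)^{|J|+\sum_{i\in J\cap[k]}d_i}$ and the $\epsilon$ from the relative orientation of a one-point nice manifold come out the same. Two remarks. First, the self-intersection lemma $(i\,m)^{\smile 2}=(i\,m)\smile Ch(i)$, $(i\,\overline{m})^{\smile 2}=-(i\,\overline{m})\smile Ch(i)$ is true (and the normal-bundle/antiholomorphicity argument is a nice Chern analogue of Lemma~\ref{transposition_square}), but in your proof it is a logically superfluous detour: combined with the disjointness splitting it only reproduces the tautology $Ch(i)^{\smile d_i}=Ch(i)\smile Ch(i)^{\smile(d_i-1)}$, after which you switch to distinct targets anyway. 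Second, the intermediate bookkeeping in the $(i\,\overline{m})$-branch is slightly off: after rules (2)/(4) move $i$ to the $J$-part, it is $A_i\setminus T_i$ (not $T_i$) that lands in $J$, so the identification $J\cap\{k+2,\dots,n-2\}=\bigcup_i T_i$ fails for $i\in J\cap[k]$; this slip is exactly compensated by the extra $(-1)^{d_i-1}$ you attach, so your final congruence $\varepsilon(I,J)\equiv N\pmod 2$ is nevertheless the right one, as a direct recount of $(-1)^{|T|}=(-1)^{d_i-|J\cap(\{i\}\cup A_i)|}$ in that branch confirms.
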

\begin{proof}

We choose the special way of representing the Chern classes  which is encoded in the graph depicted in Fig. \ref{FigGraph}. The vertices of the graph are labeled by elements of the set $[n]$. Each vertex $1,\dots,k$ has $d_k$ emanating directed edges.
A bold  edge  $\overrightarrow{ij}$ means that we choose the representation $$Ch(i)=(ij)-(i\overline{j}),$$
a dashed edge $\overrightarrow{ij}$ means that we choose the representation $$Ch(i)=(ij)+(\overline{i}j).$$

\begin{figure}[h]
\centering \includegraphics[width=10 cm]{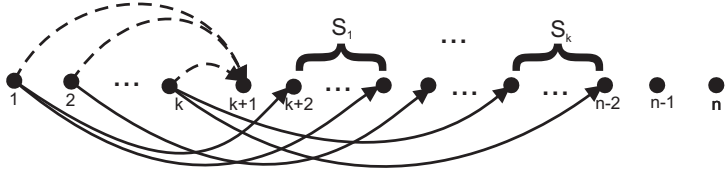}
\caption{Encoded  representations of $Ch(i)$}\label{FigGraph}
\end{figure}

Denote by $$S_i=\left\{ k+\sum_{j =1}^{i-1} d_j+2, \dots, k+\sum_{j =1}^{i} d_j+1  \right\}, $$
the set of vertices connected with the vertex $i$ by bold edges.

We multiply the Chern classes in three steps. Firstly, we multiply those that correspond to dashed edges and get

\begin{equation}\label{a}
Ch(1) \smile  Ch(2)\smile ... \smile  Ch(k)= \sum_{I \cup J = [k+1],\  k+1 \in I} (I \overline{J}).  \tag{A}
\end{equation}

Secondly, for every $i\in [k]$ we multiply the $d_{i}-1$ representations (of one and the same $Ch(i)$) that correspond to bold edges. This gives
$k$ relations
\begin{equation}\label{b}
Ch^{d_i-1}(i)=\sum_{I \cup J = S_i,\  i \in I} (-1)^{|J|} (I \overline{J}). \tag{B}
\end{equation}

Before we proceed note that:
\begin{enumerate}
\item Pick two nice manifolds, one from the sum (\ref{a}), and the other $(I \overline{J})$ from the sum (\ref{b}).  Assuming that  $I \cup J = S_i$, the unique common entry of the labels is $i$.
\item The labels of any two nice manifolds from the sum (\ref{b}) which are associated with different $i$'s are disjoint.
\end{enumerate}

Finally, one computes the product of (\ref{a}) and (\ref{b}) using the rules from Proposition \ref{computation_rules}. Every summand $(I \overline{J})$ in the result is a product of one nice manifold $(I_0 \overline{J_0})$ corresponding to a dashed edge, and $k$ nice manifolds $(I_1 \overline{J_1}), \dots,(I_k \overline{J_k})$ corresponding to the bold edges.

\medskip

Let us exemplify the last computation.
\begin{itemize}
\item For $n=6$:

$$Ch^2(1)\smile Ch(2)=[(13)+(\overline{1}3)]\smile [(23)+(\overline{2}3)] \smile[(14)-(1\overline{4}) ] \smile (2)=$$$$
[(123)+(\overline{1}23)+(1\overline{2}3)+(\overline{1}23)] \smile[(14)-(1\overline{4})]  =$$$$(1234)-(123\overline{4})+(1\overline{2}34)-(1\overline{2}3\overline{4})-(\overline{1}23\overline{4})
+(\overline{1}234)-(\overline{1}\overline{2}3\overline{4})+(\overline{1}\overline{2}34).$$

\item For $n=7$:

$$Ch^2(1)\smile Ch^2(2)=$$$$[(13)+(\overline{1}3)]\smile[(23)+(\overline{2}3)] \smile [(14)-(1\overline{4}) ] \smile [(25)-(2\overline{5})]=$$$$[(1234)-(123\overline{4})+(1\overline{2}34)-(1\overline{2}3\overline{4})-(\overline{1}23\overline{4})
+(\overline{1}234)-(\overline{1}\overline{2}3\overline{4})+(\overline{1}\overline{2}34)] \smile [(25)-(2\overline{5})]=$$$$(12345)-(1234\overline{5})-(123\overline{4}5)+(123\overline{4}\overline{5})-
(1\overline{2}34\overline{5})+(1\overline{2}345)+$$$$(1\overline{2}3\overline{4}\overline{5})-
(1\overline{2}3\overline{4}5)-(\overline{1}23\overline{4}5)+(\overline{1}23\overline{4}\overline{5})+(\overline{1}2345)-
(\overline{1}234\overline{5})+$$$$(\overline{1}\overline{2}3\overline{4}\overline{5})-
(\overline{1}\overline{2}3\overline{4}5)-(\overline{1}\overline{2}34\overline{5})+(\overline{1}\overline{2}345).$$
\end{itemize}

\end{proof}

\end{document}